\definecolor{NiColor}{RGB}{77,77,255}
\definecolor{NiColoRed}{RGB}{255,77,77}
\definecolor{NiCitation}{RGB}{0,181,26}
\newtheoremstyle{TheoremStyle}
{3pt}
{3pt}
{\slshape}
{}
{\sc}
{:}
{.5em}
{}
\theoremstyle{TheoremStyle}
\newtheorem{theorem}{Theorem}
\newtheorem{proposition}[theorem]{Proposition}
\newtheorem{lemma}[theorem]{Lemma}
\newtheorem{definition}[theorem]{Definition}
\newtheorem{remark}[theorem]{Remark}
\newtheorem{assumption}{Assumption}
\def\@endtheorem{\hfill$\lozenge$} 
\title{Classical dynamics of infinite particle systems in an operator algebraic framework}
\author[a]{\href{mailto:t.van_nuland@unsw.edu.au}{T. D. H. van Nuland\footnote{Corresponding author}}}
\author[b]{\href{mailto:chris.ven@fau.de}{Christiaan J. F. van de Ven}}
\affil[a]{Delft University of Technology,  Faculty Electrical Engineering, Mathematics and Computer Science, Building 36, Mekelweg 4
2628 CD, Delft, The Netherlands}
\affil[b]{Friedrich-Alexander-Universit\"at Erlangen-N\"urnberg, Department of Mathematics,
Cauerstra\ss e 11, 91058 Erlangen, Germany}
\def\bearray{\begin{eqnarray}}
\def\earray{\end{eqnarray}}
\def\beq{\begin{equation}}
\def\eeq{\end{equation}}
\def\b0{{\bf 0}}
\def\supp{\operatorname*{supp}}
\def\R{\mathbb R}
\def\Z{\mathbb Z}
\def\mN{\mathcal{N}}
\def\S{\mathcal{S}}
\def\N{{\mathbb N}}
\def\bR{{\mathbb R}} 
\def\R{{\mathbb R}}
\def\bS{{\mathbb S}}
\def\c{\textnormal{c}}
\def\b{\textnormal{b}}
\def\CR{C_\mathcal{R}}
\def\SR{\mathcal{S}_\mathcal{R}}
\def\DhR{\hat{\mathcal{D}}_{\mathcal{R}}}
\newcommand{\supnorm}[1]{\|#1\|_\infty}
\newcommand{\spn}{\operatorname{span}}
\newcommand{\ran}{\operatorname{ran}}
\newcommand{\pos}{\textnormal{pos}}
\newcommand{\mom}{\textnormal{mom}}
\begin{document}

\maketitle
\begin{abstract}
We construct C*-dynamical systems for the dynamics of classical infinite particle systems describing harmonic oscillators interacting with arbitrarily many neighbors on lattices, as well on more general structures.
Our approach allows particles with varying masses, varying frequencies, irregularly placed lattice sites and varying interactions subject to a simple summability constraint.  A key role is played by the commutative resolvent algebra, which is a C*-algebra of bounded continuous functions on an infinite dimensional vector space, and in a strong sense the classical limit of the Buchholz--Grundling resolvent algebra, which suggests that quantum analogs of our results are likely to exist. 
For a  general class of Hamiltonians, we show that the commutative resolvent algebra is time-stable, and admits a time-stable sub-algebra on which the dynamics is strongly continuous, therefore obtaining a C*-dynamical system.
\end{abstract}

\tableofcontents

\section{Introduction}\label{Introduction}
In this work we study the time evolution of classical infinite particle systems. We let $\Gamma$ be a countable set of sites -- for instance, a lattice in $\R^\ell$. Around each site a particle is pinned by a harmonic force, and each pair of particles interact through attractive and/or repulsive forces. If $q_k$ denotes the displacement of a particle from its chosen lattice position $k\in\Gamma$, and $p_k$ denotes its momentum, our Hamiltonian can be formally written as
\begin{align}\label{eq:H intro}
H(p,q)=\sum_{k\in\Gamma}\left(\frac{\|p_k\|^2}{2m_k}+\frac{\nu_k\|q_k\|^2}{2}\right)+\sum_{k,l\in\Gamma}V_{kl}(q_k-q_l),
\end{align}
for varying constants $m_k,\nu_k>0$ and varying bounded interaction potentials $V_{kl}$ subject to certain smoothness and summability constraints; precise definitions are in Section \ref{Sec:oscillating lattice systems}.
In contrast to many other works in this context, we do not assume that the masses $m_k$ and force constants $\nu_k$ of the particles are identical, nor do we place any constrains on the nature of the lattice, phrasing the thermodynamic limit purely in terms of the partial order of finite subsets of $\Gamma$ with inclusion. As a consequence, our results hold for general solid-state material structures $\Gamma\subseteq\R^\ell$ that are not necessarily arranged in a periodic lattice. Practical examples range from crystals to glass, from doped metals to carbon nanotubes. From the extremely chaotic and non-integrable dynamical behavior of such a system, we nonetheless extract certain ``asymptotically linear'' characteristics, which we use to fully translate the phase space description of the system into an algebraic description, resulting in a C*-dynamical system.
This provides a general algebraic framework in which fundamental questions about thermodynamic equilibrium of many particle systems can be raised and, hopefully, be answered in future projects \cite{Aizenman_Goldstein_Gruber_Lebowitz_Martin_1977,Drago_vandeVen_2023,Gallavotti_Verboven_1975,vandeVen_2022_2}. 

The commutative resolvent algebra is a commutative C*-algebra introduced in \cite{vanNuland_2019}, and shown there to be the classical limit of the Buchholz--Grundling resolvent algebra $\mathcal{R}(X)$ \cite{Buchholz_Grundling_2008} in a strict sense.
The latter algebra has proved to be an excellent framework for the study of  Bose-Einstein condensation \cite{Detlev_Buchholz_2022,Detlev_Buchholz_2022_2,Buchholz_Bahns_2021}, and is known to model quantum lattice systems with an infinite number of degrees of freedom in a large class of settings \cite{Buchholz_Grundling_2015,Detlev_Buchholz_2017}, although not yet for systems as general as considered here in the classical case. 
  Classical models are often more intuitive, as their observable algebras are simply given by function algebras defined on phase space. We use this phase space intuition to vastly generalize the dynamics known to work, and thus create a blueprint \cite{Galitski, Vuillermot_1980} for what can be hoped for in the quantum case.

To motivate the need for the (commutative) resolvent algebra let us consider the case of spin lattice systems for which the underlying configuration space is typically taken to be an infinite product of spheres $\bS^d$, which is compact as a result of Tychonoff's theorem. In this setting, there is no ambiguity about the underlying algebraic structure encoding the observables, since $C_\c(\Pi_{\mathbb{Z}^\ell} \bS^d)=C_0(\Pi_{\mathbb{Z}^\ell}\bS^d)=C(\Pi_{\mathbb{Z}^\ell}\bS^d)$, where $C_\c$ denotes the compactly supported continuous functions, $C_0$ the continuous functions vanishing at infinity and $C$ all the continuous functions.  This drastically changes when the single particle phase space is not compact anymore. Indeed, the usual C*-algebra $C_0(X)$ is trivial for infinite dimensional normed vector spaces $X$, since compact subsets of $X$ have empty interior by Riesz' Lemma.
In this more general context various rigorous results are often known in a measure theoretical, rather than an algebraic setting \cite{Georgii_1988,Ruelle_1969,PPT_1976}. Despite the fact that these approaches have shown their importance in classical statistical mechanics, one may still hope for a fully C*-algebraic description of interacting dynamical lattice systems: this would allow for an abstract
description where the features of physical observables are encoded by algebraic relations which are well manageable in C*-algebras. This is precisely where the commutative resolvent algebra, denoted by $\CR(X)$, comes to the rescue. 

In contrast to $C_0(X)$, the C*-algebra $\CR(X)$ is  a non-trivial subalgebra of $C_b(X)$, and yet, for finite subspaces $V\subseteq X$ the algebra $C_0(V)$ is embedded in $\CR(X)$ by the pull-back of the projection onto $V$.
In fact, $\CR(X)$ is the inductive limit of such $C_0(V)$, and thus forms a conveniently small class of well-behaved quasi-local classical observables (cf. \cite{Buchholz_Grundling_2008,vanNuland_2019,vanNuland_2022}), as opposed to $C_b(X)$.
This inductive limit furthermore makes it possible to study 
the thermodynamic limit. A major advantage is that  physical emergent features such as the occurrence of spontaneous symmetry breaking and  phase transitions in infinite systems can  be analyzed from  approximations of the relatively well-known finite subsystems \cite{Moretti_vandeVen_2021,vandeVen_2022}, which describe these phenomena in real matter \cite{vandeVen_2023}.

The first main contribution of this paper, Theorem \ref{thm: CR stable}, is to show that the commutative resolvent algebra
is invariant under the time evolution of any Hamiltonian $H$ of the very general form \eqref{eq:H intro}. 
The second main contribution, Theorem \ref{thm: final goal}, then shows that, for each specific $H$ (fixing the numbers $\{m_k\}_{k\in\Gamma},\{\nu_k\}_{k\in\Gamma}$ and potentials $\{V_{kl}\}_{k,l\in\Gamma}$) the C*-algebra $\CR(X)$ has a nontrivial subalgebra that forms a C*-dynamical system for this specific dynamics. The latter actually follows quite easily from the former by an adapted argument of \cite{Buchholz_Grundling_2008}.
Different dynamics may induce different subalgebras, but they will always be contained in $\CR(X)$, therefore making them accessible for subsequent analysis concerning states, symmetry breaking, phase transitions, et cetera.
\\\\
Compared to traditional measure-theoretic and probalistic approaches, the algebraic approach has several advantages, of which we highlight two.

A first advantage is that the algebraic approach enables the study of classical KMS states. The classical KMS condition, introduced in \cite{Gallavotti_Verboven_1975}, is an analogue of the usual (quantum) KMS condition, now formulated in terms of a commutative 
C*-algebra, a Poisson bracket, and a globally defined automorphism, all of which are present in our current setting. Solutions to this equation, known as classical KMS states, are expected to encode the equilibrium properties of the system as a function of temperature \cite{Drago_Pettinari_Ven,Drago_Pettinari_Ven2,Drago_vandeVen_2023}. In contrast to the quantum case, at the classical level there exists another much better known notion of thermodynamic equilibrium, namely that described by the Dobrushin--Lanford--Ruelle (DLR) equations and Gibbs measures. In general, however, these two notions do not necessarily coincide. The commutative resolvent algebra therefore opens up an entirely new domain of the study of thermodynamic equilibrium through both these concepts.

A second advantage is the possibility of studying incomplete flows via partial actions. Since we work with automorphisms of a  C*-algebra rather than with flows on a classical phase space, we obtain a dynamical framework in which the (non-trivial) \emph{ideal structure} of the commutative resolvent algebra plays 
a fundamental role \cite{Exel}. This perspective becomes particularly powerful in situations where the 
Hamiltonian flow fails to be globally defined --as in systems with Coulomb  interactions. In such cases, the theory of \emph{partial actions}  \cite{Exel} provides the appropriate framework: it allows the dynamics to be restricted to those closed ideals on which the automorphisms are well defined,  thereby yielding partial dynamical systems. As a result, the study of the ideal structure is essential for understanding the dynamics. 
\\\\
From a technical point of view our proof profits from the strategy applied in \cite{vanNuland_Stienstra_2019} where phase space analysis is used to prove stability for finite systems of particles with a compact configuration space. To extend these results to noncompact configuration spaces is however still a technical exercise. 

The subsequent extension to infinite systems is done by an elegant Dyson-series argument, inspired by and extending \cite{Robinson}. Our extension neatly shows how quantum reasoning can be carried over to classical reasoning when the right smoothness criteria are employed, why one can still far exceed nearest-neighbor interactions, and why one can remove all dependence on the metric structure of $\Gamma$.

The intent of making the thermodynamic limit completely independent of a metric is inspired by the ideas of noncommutative geometry and the (dual) continuum limit in lattice gauge theory. Both topics are often studied with a C*-algebraic description of physics in mind, and both are only yet rigorous in the classical case.
Our general set-up seems to be suitable for the inclusion of such more abstract models, as well as the inclusion of curved configuration spaces and possibly Coulomb interactions.
\\\\
The paper is organized as follows: Section  \ref{Sec:oscillating lattice systems} is devoted to a brief introduction to interacting and oscillating particle systems and the commutative resolvent algebra. In Section \ref{classical dynamical system resolvent} the Hamiltonian dynamics is introduced for which we show in Section \ref{Sect:finite cases} that it leaves invariant the commutative resolvent algebra in the case of finite systems. Secondly, by use of a summability condition on the interactions, this result is extended to infinite systems by means of Theorem \ref{thm: CR stable} in Section \ref{Section: infinite dynamics}. In Section \ref{Sec:strong continuity}, strong continuity of the time evolution on certain subalgebras yields our final result, Theorem \ref{thm: final goal}. We discuss our findings and future prospects in Section \ref{sct:Discussion}.




\section{Preliminaries}\label{Sec:oscillating lattice systems}
We introduce our conventions and assumptions regarding the classical oscillating and interacting (in)finite particle systems under consideration. 

\subsection{The index set and its thermodynamic limit}
We consider an arbitrary countable set $\Gamma$ -- typically interpreted as a discrete subset of $\R^\ell$, namely, as the set of points of confinement around which the particles are pinned by a harmonic potential. 
This already endows the set of finite subsets of $\Gamma$ with a partial order (inclusion), which is upward directed and hence defines a notion of convergence -- the one of nets.\footnote{This notion of convergence is the strongest possible one on subsets of $\Gamma$ in which every point of $\Gamma$ is eventually absorbed in the subsets, and implies for instance convergence with respect to subsets of hypercubes, et cetera.} Explicitly, if $F$ is any function defined on finite subsets of $\Gamma$ and taking values in a normed space,
\[
\lim_{\Lambda \nearrow \Gamma} F(\Lambda) = \alpha
\]
means that for every $\varepsilon > 0$, there exists a (finite) subset $K_\epsilon \Subset \Gamma$ such that for all $K_\epsilon\subseteq \Lambda\Subset\Gamma$ we have
\[
\|F(\Lambda) - \alpha\| < \varepsilon. 
\]

Assumptions on the material topology and geometry will be encoded not in $\Gamma$ but in our assumptions on the interaction potentials, to be discussed in \NAK \ref{subsection: Hamiltonian}. 

\subsection{The phase space}\label{sct:subsection The phase space}
To each element of $\Gamma$ we associate a phase space $\mathbb R^{2d}$, and our total phase space is given by
$$\Omega=\ell_\textnormal{c}(\Gamma,\R^{2d})=\ell_\textnormal{c}(\Gamma,\R^d)\times \ell_\textnormal{c}(\Gamma,\R^d)$$
consisting of pairs $\omega=(p,q)\in\Omega$ of finite sequences $p=(p_l)_{l\in\Gamma},q=(q_l)_{l\in\Gamma}$ for which each entry takes values in $\R^d$.
The components of $p_l$ (resp. $q_l$) in $\R^d$ are denoted $p_{l,i}$ (resp. $q_{l,i}$) for $i=1,\ldots,d$. 

Let $\Lambda\Subset\Gamma$ be any finite subset labeling the particles of a subsystem.
For $p\in\Omega_\Lambda^\mom\cong\R^{|\Lambda|}\otimes\R^d$, the precise definition of $\Omega_\Lambda^\mom$ given below,
we sometimes use the identification $p=\sum_{k\in\Lambda}e_k\otimes p_k\in\R^{|\Lambda|}\otimes \R^d$,
with $e_k$ the $k^{th}$ standard basis vector of $\R^{|\Lambda|}$, i.e. $(e_k\otimes p_k)_l=\delta_{k,l}p_k\in\R^d$, and similarly for $q$.

By construction $\Omega$ is a countably infinite dimensional vector space which admits a natural inner product induced by the inclusion $\Omega\subseteq \ell^2(\Gamma,\R^{2d})$ into the square-summable sequences.
For interpretational purposes, one may rather regard $\ell^2(\Gamma,\R^{2d})$ as the underlying phase space, and Remark \ref{rmk: construction infinite system} shows that this is valid. In that sense, $\Omega$ is an auxiliary dense subset of the phase space, used to define the observable algebra, but for brevity we shall refer to $\Omega$ as the phase space.

We moreover define, for each finite subset $\Lambda\Subset\Gamma$,
$$\Omega_\Lambda:=\{(p,q)\in\Omega:~p_l=q_l=0\text{ for }l\notin\Lambda\}\cong \R^{2|\Lambda|d}.$$
We occasionally adopt the notation
\begin{align*}
    \Omega_\Lambda^\mom=&\{(p,0)\in\Omega_\Lambda\}\cong \R^{|\Lambda|d};\\ 
\Omega_\Lambda^\pos=&\{(0,q)\in\Omega_\Lambda\}\cong \R^{|\Lambda|d},
\end{align*}
and we note that $\Omega_\Lambda=\Omega_\Lambda^\mom\oplus\Omega_\Lambda^\pos$.
We furthermore emphasize that
$$\Omega=\bigcup_{\Lambda\Subset\Gamma}\Omega_\Lambda.$$

\subsection{The commutative resolvent algebra}\label{Commutative resolvent algebra}
The first step in constructing an operator algebraic framework for a physical system is the construction of a C*-algebra defining the class of relevant observables. 
The observable algebra that this paper proposes is called the commutative resolvent algebra \cite{vanNuland_2019} of $\Omega$, defined as follows.

Let $X$ be a (possibly infinite-dimensional) real-linear inner product space. The {\em commutative resolvent algebra} of $X$, denoted $\CR(X)$, is the C*-subalgebra of the algebra of bounded operators $C_\b(X)$ generated by resolvent functions on $X$, i.e.,  functions of the form
\begin{align*}
	h_x^\lambda(y):=1/(i\lambda- x\cdot y),
\end{align*}
for $x\in X$, $\lambda\in\mathbb{R}\setminus\{0\}$.\footnote{Actually $\CR(X)$ is already generated as a Banach space by $h^\lambda_x$, as shown in \cite[Theorem 4.4 and (2)]{vN23}.} 
The inner product on $X$ yields a norm $||\cdot||$ and a topology with respect to which $h_x^\lambda:X\to\mathbb C$ is a continuous function.

We consider $\CR(\Omega)$, with $\Omega=\ell_c(\Gamma,\R^{2d})\subseteq\ell^2(\Gamma,\R^{2d})$ of Section \ref{sct:subsection The phase space}.
Conveniently, $\CR(\Omega)$ is the inductive limit of the net of all $\CR(V)$, where $V\subset \Omega$  ranges over all finite-dimensional subspaces of $\Omega$, and the connecting maps defining this limit are the pull-backs of the projection maps $W\twoheadrightarrow V$ for $V\subset W$. This remains true if one restricts the net to any cofinal class of finite-dimensional subspaces. It follows that 
$$\CR(\Omega)=\varinjlim \CR(\Omega_\Lambda),$$ where the inductive limit is taken with respect to the pull-backs of ${\pi_\Lambda}|_{\Lambda'}:\Omega_{\Lambda'}\to\Omega_{\Lambda}$ $~(\Lambda\subseteq\Lambda'\Subset\Gamma)$, where $\pi_\Lambda:\Omega\to\Omega_{\Lambda}$ is the orthogonal projection onto $\Omega_\Lambda$. More explicitly, we have (cf. \cite[Proposition 6.2.4(i)]{inductive})
\begin{align}\label{eq:CR closure of union}
\CR(\Omega)=\overline{\bigcup_{\Lambda\Subset\Gamma}(\CR(\Omega_\Lambda)\circ\pi_\Lambda)}.
\end{align}
Hence, $\CR(\Omega)$ is an algebra of ``quasi-local'' observables \cite{Landsman}, containing a dense subalgebra of ``local'' observables, i.e. functions that only depend on finitely many particles.

\begin{remark}\label{rmk: construction infinite system}
      One can consider $\CR(\Omega)$ as a space of functions on the larger phase space $Y=\ell^2(\Gamma,\R^{2d})$. Indeed, the canonical projections $Y\twoheadrightarrow\Omega_\Lambda$ induce injective *-homomorphisms $\CR(\Omega_\Lambda)\hookrightarrow C_\b(Y)$ which by inductive limit induce an injective *-homomorphism $\CR(\Omega)\hookrightarrow C_\b(Y)$.
      As such, $\CR(\Omega)$ is also an observable algebra on the phase space $Y$, and in fact, distinguishes all points of $Y$.
\end{remark}

\subsection{A dense Poisson algebra}
We  define the subspace $\SR(\Omega)\subset \CR(\Omega)$ as the span of so-called levees $g\circ \pi$ for which $g$ is Schwartz, namely
\begin{align}\label{eq: levees}
	\SR(\Omega):=\text{span}\{ g\circ \pi ~|~\pi\text{ fin. dim. projection on $\Omega$, }\ g\in \S(\ran(\pi)) \},
\end{align}
where $\S(\text{ran}(\pi))$ denotes the Schwartz space on $\text{ran}(\pi)$. More generally, a ``levee'' is a function $f = g \circ \pi\in \CR(\Omega)$ for a finite dimensional projection $\pi$ and a function $g \in C_0(\text{ran}(\pi))$. By \cite[Prop. 2.4]{vanNuland_2019} the set $\SR(\Omega)$ is a dense *-subalgebra of $\CR(\Omega)$.

We can put a Poisson bracket on $\SR(\Omega)$ by use of the canonical Poisson bracket on $C^\infty(\Omega_\Lambda)\cong C^\infty(\R^{2|\Lambda|d})$, as follows. 
For any two functions $f_1,f_2\in \SR(\Omega)$ we can choose $\Lambda\Subset\Gamma$ large enough such that $f_1=g_1\circ \pi_{\Lambda}$ and $f_2=g_2\circ \pi_\Lambda$ for functions $g_1,g_2\in \SR(\Omega_\Lambda)\subseteq C^\infty(\Omega_\Lambda)$, and where $\pi_\Lambda:\Omega\to\Omega_\Lambda$ denotes the orthogonal projection.
We define
\begin{align*}
    \{g_1\circ \pi_{\Lambda},g_2\circ \pi_{\Lambda}\}:=\{g_1,g_2\}_\Lambda\circ \pi_{\Lambda},
\end{align*}
where $\{g_1,g_2\}_\Lambda$ is defined by
\begin{align*}
	\{g_1,g_2\}_{\Lambda}(p,q):=\sum_{l\in\Lambda}\sum_{i=1}^d \bigg(\frac{\partial g_1(p,q)}{\partial q_{l,i}}\frac{\partial g_2(p,q)}{\partial p_{l,i}}-\frac{\partial g_1(p,q)}{\partial p_{l,i}}\frac{\partial g_2(p,q)}{\partial q_{l,i}}\bigg),
\end{align*}
for all $(p,q)\in\Omega_\Lambda$.

One can prove that $\{g_1\circ \pi_{\Lambda},g_2\circ \pi_{\Lambda}\}$ does not depend on $\Lambda$ and lands in $\SR(\Omega)$; the essential part of the proof is that a partial derivative of a levee is again a levee.
Furthermore, the Poisson bracket thus defined coincides with the Poisson bracket introduced in \cite{vanNuland_2019}.


\subsection{Local Hamiltonians}\label{subsection: Hamiltonian}
For each finite $\Lambda\Subset\Gamma$ we consider the local Hamiltonian 
\begin{align}\label{Hamiltoniannew2}
H_\Lambda(p,q):=\sum_{k\in\Lambda}\left(\frac{\|p_k\|^2}{2m_k}+\frac{\nu_k\|q_k\|^2}{2}\right)+\sum_{k,l\in\Lambda}V_{kl}(q_k-q_l),
\end{align}
for $(p,q)\in\Omega$. 
Here, $\|\cdot\|$ is the Euclidean norm on $\R^d$,  and $m_k>0$ and $\nu_k> 0$ denote the mass and force constant of particle $k$, and $V_{kl}$ denotes the interaction potential between particles $k$ and $l$, subject to conditions below. 
We note that $H_\Lambda(p,q)$ depends solely on $(p_\Lambda,q_\Lambda)=\pi_\Lambda(p,q)\in\Omega_\Lambda$, and hence we may view $H_\Lambda$ as a function acting on the finite-dimensional phase space $\Omega_\Lambda$.
Observe furthermore that the model defined by \eqref{Hamiltoniannew2} can be interpreted as a generalization of an oscillating and interacting lattice system. 

For a multi-index $\beta:\{1,\cdots, d\}\to\mathbb{Z}_{\geq 0}$ with $|\beta|=\sum_{i=1}^{d}\beta(i)$,  we write
$$\partial^\beta:=\partial_{1}^{\beta(1)}\cdots\partial_{d}^{\beta(d)},$$
where $\partial_{i}^{\beta(i)}=\partial^{\beta(i)}/\partial x_i^{\beta(i)}$ are the usual partial derivatives of order $\beta(i)$ corresponding to the $i^{th}$ coordinate of $\mathbb{R}^d$.
\begin{assumption}\label{conditions}
The following conditions are assumed:
\begin{itemize}
    \item[(i)] $V_{kl}(x)=V_{lk}(-x)$;
\item[(ii)] $V_{kl}\in C_0^{\infty}(\mathbb{R}^d,\bR)$ for each $k,l\in\Gamma$;
\item[(iii a)] there exist constants $C\geq 0$ and $C_{kl}\geq 0$ for each $k,l\in\Gamma$ such that
$$\|\partial^\beta V_{kl}\|_\infty\leq C_{kl} C^{|\beta|}$$
for all $\beta:\{1,\ldots,d\}\to\Z_{\geq 0}$;
\item[(iii b)] $\sup_{k\in\Gamma}\sum_{l\in\Gamma}C_{kl}<\infty$;
\item[(iv)]  $\sup_{k\in\Gamma}\{m_k\nu_k,1/(m_k\nu_k)\}<\infty$.
\end{itemize}
\end{assumption}
The summability condition (iii b) is an abstraction and generalisation of the stability condition of \cite{Vuillermot_1980}. 
\begin{remark}
Suppose that all interactions are given by the same symmetric potential $V$.
Let $x_{kl}$ be the vector from the pin of $k$ to the pin of $l$. Then automatically $x_{kl}=-x_{lk}$. If we define
$$V_{kl}(x):=\tfrac12 V(x-x_{kl}),$$
it follows that
$$V_{kl}(x)=\tfrac12 V(x-x_{kl})=\tfrac12 V(-x+x_{kl})=\tfrac12 V(-x-x_{lk})=V_{lk}(-x),$$
which is precisely (i). Condition (iii) has to be interpreted in the sense that for each particle $k$, the combined influence of all other particles $l$ sums to something finite. This can be achieved when $\Gamma$ has a group structure by setting $V_{kl}(x)=\tfrac12 j(k-l)V(x-x_{kl})$, and taking $C_{kl}:=\frac{1}{2}j(k-l)$, for a function $j\in \ell^1(\Gamma)$ which, acting as a regulator approximating $j=1_{\Gamma}$, is significantly more general than $j$ being supported on a finite set of neighbours, and like \cite{Vuillermot_1980} signals importance of the $\ell^1$-topology. 
\end{remark}
If one rephrases the prescription  \eqref{Hamiltoniannew2} in terms of the ``interaction potential'' $\Psi$ one finds in the literature \cite{Israel_1979,Landsman}, one obtains up to a constant
 \begin{align}\label{abstract potential}
 	H_\Lambda=\sum_{\Lambda'\subseteq\Lambda}\Psi_{\Lambda'},\qquad\Psi_{\Lambda'}(p,q)=\begin{cases}
 		\frac{||p_k||^2}{2m_k}+\frac{\nu_k||q_k||^2}{2}, \ \ \ \text{if} \ \Lambda'=\{k\};\\
 		2V_{kl}(q_k-q_l), \ \ \ \text{if} \ \Lambda'=\{k,l\};\\
 		0, \ \ \ \text{otherwise}.
 	\end{cases}
 \end{align}
\subsection{Definition of the dynamics on $\CR(\Omega_\Lambda)$}\label{classical dynamical system resolvent}
For finite subsystems $\Omega_\Lambda$ we identify $\Omega_\Lambda\simeq\R^{|\Lambda|d}\times\R^{|\Lambda|d}$. Assumption (iiia) on $V_{kl}$ together with the Fundamental Theorem of Calculus implies Lipschitz continuity of the $\partial_i V_{kl}$: $$\|\partial_i V_{kl}(x)-\partial_i V_{kl}(y)\|\leq\sum_{j=1}^d\int_y^x \|\partial_{j}\partial_i V_{kl}\|_\infty\leq  dC_{kl}C^2\|x-y\|.$$ Therefore, by the Picard--Lindel\"{o}f theorem, the
Hamilton equations have unique solutions on $\Omega_\Lambda$. More precisely,
for every $(p,q)\in\Omega_\Lambda$, there exists a unique function $\R\to\Omega_\Lambda,$ $t\mapsto\Phi^t_{H_\Lambda}(p,q)$ such that
\begin{align}
    \frac{d}{dt}\Phi_{H_\Lambda}^t(p,q)_{k,i}&=\bigg(-\frac{\partial}{\partial q_{k,i}}H_\Lambda(\Phi_{H_\Lambda}^t(p,q)),\frac{\partial}{\partial p_{k,i}} H_\Lambda(\Phi_{H_\Lambda}^t(p,q))\bigg)\label{hamilton equations};\\
    \Phi_{H_\Lambda}^0(p,q)&=(p,q)\label{hamilton equations8},
\end{align}
for all $k\in\Lambda$, $i=1,\cdots,d$; and similarly one defines $\Phi_H^t$ for other Hamiltonians $H$.  The expression on the right-hand side of the first line of equation \eqref{hamilton equations} are simply the components of the Hamiltonian vector field $X_{H_\Lambda}$ of $H_\Lambda$ evaluated at $\Phi^t_{H_\Lambda}(p,q)\in \Omega_\Lambda$. It is a general fact that $\Phi_{H_\Lambda}^t:\Omega_\Lambda\to\Omega_\Lambda$ is a homeomorphism.  

To use a compact notation, we indicate by $\nabla_q$ the gradient 
\begin{align}
    \nabla_q=\sum_{k\in\Lambda}\sum_{i=1}^d(e_k\otimes \delta_i)\frac{\partial}{\partial q_{k,i}},
\end{align}
where $\{e_k\}_{k\in\Lambda}$ a basis for $\mathbb{R}^{|\Lambda|}$ and $\{\delta_i\}_{i=1}^d$ a basis for $\mathbb{R}^d$,
and 
we define $\nabla_p$ in a similar way.  In contrast to the single-site gradient $\nabla$, the gradients $\nabla_p$ and $\nabla_q$ depend on the finite system $\Lambda\Subset\Gamma$.
It follows that equation \eqref{hamilton equations} now reads
\begin{align*}
     \frac{d}{dt}\Phi_{H_\Lambda}^t(p,q)&= \bigg(-\nabla_qH_\Lambda(\Phi_{H_\Lambda}^t(p,q)),\nabla_pH_\Lambda(\Phi_{H_\Lambda}^t(p,q))\bigg).
\end{align*}
The following definition connects the above time evolution to the algebraic setting.

\begin{definition}[Time evolution in algebraic sense]\label{def:time}
Let $\Lambda\Subset\Gamma$ be finite and consider the Hamiltonian $H_\Lambda$ of \eqref{Hamiltoniannew2}. For each fixed $f\in \CR(\Omega_\Lambda)$ the (algebraic) local time evolution is defined to be the pull-back of the flow, that is, the map $\mathbb{R}\ni t \mapsto (\Phi_{H_\Lambda}^t)^*(f)$. 
For each $t\in\mathbb{R}$ we also define
\begin{align}
    &\alpha_\Lambda^t:\pi_{\Lambda}^*\CR(\Omega_\Lambda)\to \pi_{\Lambda}^*\CR(\Omega_\Lambda);\\
    &\alpha_\Lambda^t(f\circ\pi_\Lambda):=f\circ\Phi_{H_\Lambda}^t\circ\pi_\Lambda, \ \  (f\in\CR(\Omega_\Lambda)).
\end{align}
Since $\CR(\Omega_\Lambda)\simeq\pi_\Lambda^*\CR(\Omega_\Lambda)$ the local dynamics can be seen as a map acting on functions defined on all of $\Omega$, which is a convenient property for the forthcoming discussions in Sections \ref{Section: infinite dynamics}--\ref{Sec:strong continuity}.
\end{definition}

\section{Finite dimensional dynamical systems}\label{Sect:finite cases}
The key objective of this paper is to show that the resolvent algebra $\CR(\Omega)$ is stable under the dynamics induced by the local Hamiltonians \eqref{Hamiltoniannew2} and the corresponding thermodynamic limit.
In this section we obtain stability of $\CR(\Omega_\Lambda)$, for a fixed finite subsystem $\Lambda\Subset\Gamma$. The extension of this finite result to all of $\Gamma$ will be the content of Section \ref{Section: infinite dynamics}.

\subsection{Noninteracting time evolution}
To prove time-stability of $\CR(\Omega_\Lambda)$ under $H_\Lambda$ (Theorem \ref{thm: main}) we firstly prove time-stability under the noninteracting Hamiltonian $H_\Lambda^0$, where
\begin{align}
& H_\Lambda=H_{\Lambda}^{0}+V_{\Lambda};\label{ham 3}\\
	&H_{\Lambda}^{0}(p,q):=\sum_{k\in\Lambda}\Big(\frac{||p_k||^2}{2m_k}+\frac{\nu_k||q_k||^2}{2}\Big);\label{free hamiltonian}\\
	&V_\Lambda(p,q):=\sum_{k,l\in\Lambda}V_{kl}(q_k-q_l)\label{potentials}.
\end{align}
\begin{lemma}\label{lemm: simple time evolution}
    The time evolution of the simple harmonic oscillator $H_{\Lambda}^{0}$ of the finite particle system $\Lambda\Subset\Gamma$ preserves the commutative resolvent algebra, i.e.,
    \begin{align}\label{preservation of dynamics: simple case}
        (\Phi_{H_\Lambda^{0}}^t)^*(\CR(\Omega_\Lambda))= \CR(\Omega_\Lambda),
    \end{align}
   for each $t\in\bR$. Moreover, $(\Phi_{H_\Lambda^{0}}^t)^*(\SR(\Omega_\Lambda))=\SR(\Omega_\Lambda).$
\end{lemma}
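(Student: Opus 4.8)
The plan is to prove the statement by explicit diagonalization of the free Hamiltonian, reducing to the one-particle case and then checking invariance on generators. First I would observe that $H_\Lambda^0$ decouples completely across lattice sites and across the $n$ Euclidean components: $H_\Lambda^0(p,q) = \sum_{l\in\Lambda}\sum_{i=1}^n \left(\frac{p_{l,i}^2}{2m_l} + \frac{\nu_l q_{l,i}^2}{2}\right)$, so the flow $\Phi^t_{H_\Lambda^0}$ is a direct sum over $l\in\Lambda$, $i\in\{1,\dots,n\}$ of the standard planar harmonic-oscillator flow with frequency $\omega_l := \sqrt{\nu_l/m_l}$. Concretely, in the rescaled coordinates $(\tilde p_{l,i}, \tilde q_{l,i}) = (p_{l,i}/\sqrt{m_l\nu_l}^{1/2},\dots)$ — more precisely after the symplectic rescaling that sends $(p_{l,i},q_{l,i})$ to coordinates in which the Hamiltonian becomes $\tfrac{\omega_l}{2}(P_{l,i}^2+Q_{l,i}^2)$ — the flow is simply rotation by angle $\omega_l t$ in each $(P_{l,i},Q_{l,i})$-plane. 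Thus $\Phi^t_{H_\Lambda^0}$ is, up to a fixed linear symplectic change of coordinates, a block-diagonal orthogonal (rotation) transformation of $\Omega_\Lambda \cong \R^{2nN}$.

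Next I would invoke the fact that $\CR(\Omega_\Lambda)$ is generated as a C*-algebra by the classical resolvents $h_x^\lambda(y) = 1/(i\lambda - x\cdot y)$ for $x\in\Omega_\Lambda$, $\lambda\neq 0$, together with the observation that a pull-back by any \emph{linear} bijection $T$ of $\Omega_\Lambda$ maps generators to generators: $(T^*h_x^\lambda)(y) = h_x^\lambda(Ty) = 1/(i\lambda - x\cdot Ty) = 1/(i\lambda - (T^*x)\cdot y) = h_{T^*x}^\lambda(y)$, where $T^*$ is the adjoint of $T$ with respect to the inner product on $\Omega_\Lambda$. Since $\Phi^t_{H_\Lambda^0}$ is such a linear bijection (a composition of the fixed rescaling, a block rotation, and the inverse rescaling — all invertible linear maps), its pull-back permutes the set of generators $\{h_x^\lambda : x\in\Omega_\Lambda,\ \lambda\neq 0\}$ bijectively; hence it maps the C*-algebra they generate onto itself. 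This gives the inclusion $(\Phi^t_{H_\Lambda^0})^*(\CR(\Omega_\Lambda)) \subseteq \CR(\Omega_\Lambda)$, and applying the same reasoning to $-t$ (i.e. to $(\Phi^t_{H_\Lambda^0})^{-1} = \Phi^{-t}_{H_\Lambda^0}$) gives the reverse inclusion, yielding the claimed equality for every $t\in\R$.

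One technical point to handle carefully is that $(\Phi^t_{H_\Lambda^0})^*$, besides being a $*$-homomorphism on the polynomial/algebraic level, must be shown to be norm-continuous (or at least isometric) so that it extends to the C*-completion and the image is actually all of $\CR(\Omega_\Lambda)$ rather than merely a dense subalgebra of it; but this is immediate because pull-back by any homeomorphism is an isometric $*$-isomorphism of $C_\b(\Omega_\Lambda)$, and $\CR(\Omega_\Lambda)$ is a C*-subalgebra, so the restriction is automatically isometric. The genuinely substantive step — and the only place any real content enters — is the explicit identification of the harmonic flow as a linear symplectomorphism, which relies on Assumption (v) only insofar as it ensures $\omega_l = \sqrt{\nu_l/m_l}$ is a well-defined positive real (for a single finite $\Lambda$ this is automatic from $m_l,\nu_l>0$, so (v) is not even needed here; it becomes essential only in the infinite-volume step). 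I expect no serious obstacle in this lemma; the main care is bookkeeping the rescaling so that the resulting linear map is manifestly a bijection and its adjoint is computed correctly, and noting that linearity — not symplecticity per se — is what makes the pull-back preserve the generating set.
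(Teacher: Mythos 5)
Your proposal is correct and follows essentially the same route as the paper: decompose $H_\Lambda^0$ into $|\Lambda|n$ independent one-dimensional oscillators, identify each flow (after a symplectic rescaling) as a rotation, and conclude that $\Phi^t_{H_\Lambda^0}$ is a linear bijection of $\Omega_\Lambda$ whose pull-back therefore preserves $\CR(\Omega_\Lambda)$, with the reverse inclusion from $-t$. The only difference is the closing step: the paper finishes by noting that pull-back along a linear map sends levees to levees and then invokes density of levees in $\CR(\Omega_\Lambda)$, whereas you act directly on the generators via $h_x^\lambda\circ T=h_{T^{\mathrm t}x}^\lambda$, which is an equally valid and arguably more self-contained finish since it uses only the defining generating set rather than the cited density result; your remarks on isometry of the pull-back and on Assumption (v) being irrelevant for fixed finite $\Lambda$ are also accurate.
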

\begin{proof}
The Hamiltonian $H_{\Lambda}^{0}$ can be written as
$$H_{\Lambda}^{0}(p,q)=\sum_{k\in\Lambda}\sum_{i=1}^d\left(\frac{|p_{k,i}|^2}{2m_k}+\frac{\nu_k|q_{k,i}|^2}{2}\right).$$
The corresponding Hamiltonian system is integrable, indeed, it can be decomposed into a Cartesian product of $|\Lambda|d$ independent subsystems with Hamiltonians
\begin{align}\label{reduction}
    H_{k,i}(p_{k,i},q_{k,i})=\frac{|p_{k,i}|^2}{2m_k}+\frac{\nu_k|q_{k,i}|^2}{2}.
\end{align}
Each such subsystem corresponds to a simple harmonic oscillator of a one-dimensional one-particle system with associated flows $\Phi_{k,i}^t$, for $k\in\Lambda$, $i\in\{1,\ldots,d\}$. The flow of the total system is given by the composition of the individual flows $\Phi_{k,i}^t$, each acting as the identity on all but one factor in the Cartesian product.
The flows of these subsystems are well-known: up to scaling they are given by rotations on two-dimensional phase space with angular frequency $\omega_{k,i}:=\sqrt{\nu_k/m_k}$ for each $i=1,\cdots d$.
Therefore $\Phi_{H_\Lambda^0}^t$ is a composition of linear maps, hence linear itself. Its pull-back $(\Phi_{H_\Lambda^0}^t)^*$ therefore sends any levee to a levee.
Since the levees are dense in $\CR(\Omega_\Lambda)$, the lemma follows.
%
\end{proof}
\noindent
\subsection{Time evolution for compactly supported potentials}
The step extending noninteracting time evolution to time evolution for compactly supported interaction potentials ($V_{kl}\in C^\infty_\c(\R^d)$) is by far the most technical step, and will culminate in the following proposition.
\begin{proposition}\label{prop: compact case}
Let $\Lambda\Subset\Gamma$ be finite, and assume that $V_{kl}\in C_\c^\infty(\R^d)$ for all $k,l\in\Lambda$. Then, for all $t\in[0,1]$,
    \begin{align}\label{eq:to prove t_final}
(\Phi_{H_\Lambda}^{t})^*(\CR(\Omega_\Lambda))\subseteq \CR(\Omega_\Lambda).
\end{align}
It follows that, for all $t\in\R$,
\begin{align}\label{eq:to prove t_final2}
(\Phi_{H_\Lambda}^{t})^*(\CR(\Omega_\Lambda))= \CR(\Omega_\Lambda).
\end{align}
\end{proposition}
We explain how the second assertion follows from the first. If indeed we can prove \eqref{eq:to prove t_final} for all $t\in[0,1]$, then we have 
 $$(\Phi_{H_\Lambda}^{Mt})^*(\CR(\Omega_\Lambda))=((\Phi_{H_\Lambda}^{t})^*)^M(\CR(\Omega_\Lambda))\subseteq \CR(\Omega_\Lambda)$$
 for any $M\in\mathbb{Z}_{\geq 0}$, implying \eqref{eq:to prove t_final} for all $t\in\R_{\geq0}$, and by substituting $(-p,q)$ for $(p,q)$ and using invariance of $\CR(\Omega_\Lambda)$ under pull-backs of linear maps we obtain \eqref{eq:to prove t_final} for all $t\in\mathbb R$. Noticing that $(\Phi_{H_\Lambda}^{-t})^*$ is the inverse of $(\Phi_{H_\Lambda}^{t})^*$, we obtain \eqref{eq:to prove t_final2} for all $t\in\R$ as well.


The proof of the first assertion of Proposition \ref{prop: compact case} consists of several steps for which the following notation is introduced.
We write
\begin{align}
\pi_{kl}:\Omega_\Lambda\to\R^d;\\
\pi_{kl}(p,q)=q_k-q_l,
\end{align}
and introduce the potentials
\begin{align}\label{general potential resolvent algebra}
    V_\mathcal{N}(p,q):=\sum_{\{k,l\}\in\mathcal{N}}2V_{kl}(\pi_{kl}(p,q)),
\end{align}
for all (finite) sets
\begin{align}\label{eq:mN}
\mathcal{N}\subseteq\{\{k,l\}:~k,l\in\Lambda,k\neq l\}.
\end{align}
We now fix such a set $\mN$. For the proofs in this section we shall consider the general Hamiltonians defined according to $V_\mN$:
\begin{align*}
H_{\mN}:=&H_\Lambda^{0}+V_\mN;\\
 H_{\mathcal{N}-kl}:=&H_{\mN\setminus\{\{k,l\}\}}\\
 =&H_\Lambda^{0}+V_\mN-2V_{kl}\circ \pi_{kl},
\end{align*}
for all $\{k,l\}\in\mathcal{N}$.


\begin{definition}\label{def:integral curves}
Let $\{k,l\}\in\mathcal{N}$, and consider a given $(p_0,q_0)\in\Omega$. We denote 
\begin{align*}
    \omega(t)&\equiv (p(t),q(t)):=\Phi^t_{H_\mathcal{N}}(p_0,q_0);\\
    \tilde{\omega}(t)&\equiv (\tilde p(t),\tilde q(t)):=\Phi^t_{H_{\mathcal{N}-kl}}(p_0,q_0);\\
    \omega^0(t)&\equiv(p^0(t),q^0(t)):=\Phi^t_{H_\Lambda^{0}}(p_0,q_0).
\end{align*}
These are the integral curves through $(p_0,q_0)$ corresponding to three different flows. 
\end{definition}

In the proof of Proposition \ref{prop: compact case}, we quantitatively compare the flow of $H_\mN$ with the flow of $H_{\mN-kl}$, and thus build up the full $H_\Lambda$ inductively from $H^0_\Lambda$, an idea which is based on \cite[Prop.11]{vanNuland_Stienstra_2019}. However, both because our configuration space is noncompact, and because our noninteracting Hamiltonian is more complicated, we will need ideas beyond \cite{vanNuland_Stienstra_2019}. The first is captured by the following lemma, which states that two noninteracting particles oscillating with possibly different frequencies do not spend too much time in each other's presence. 
For this purpose we introduce the following notation. We write
$$\pi_{kl}^{\mom,\pos}=(\pi_{kl}^\mom,\pi_{kl}^\pos):\Omega_\Lambda^\mom\oplus\Omega_\Lambda^\pos\to\R^{d}\oplus\R^d,$$
where 
$$\pi_{kl}^\mom(p)=\frac{1}{m_k}p_k-\frac{1}{m_l}p_l,\qquad\pi_{kl}^\pos(q)=q_k-q_l.$$ 

\begin{lemma}\label{Lemm: measure zero}
Let $\lambda$ denote the Lebesgue measure on $[0,1]$, let $B_R(0)$ be the ball around zero for some radius $R>0$, and fix $k,l\in\Lambda\Subset\Gamma$. 
\begin{enumerate}
    \item Suppose that $\frac{\nu_k}{m_k}= \frac{\nu_l}{m_l}$. For every $\epsilon>0$ there exists $D_{k,l,\epsilon}>0$ such that for all $(p_0,q_0)\in\Omega_\Lambda$ satisfying $\|\pi^{\mom,\pos}_{kl}(\omega)\|>D_{k,l,\epsilon}$ we have (in the notation of Definition \ref{def:integral curves})
\begin{align}\label{eq:measure epsilon}
\lambda\left(\left\{t\in[0,1]\mid q^0_k(t)-q^0_l(t)\in B_R(0)\right\}\right)<\epsilon.
\end{align}
    \item Suppose that $\frac{\nu_k}{m_k}\neq \frac{\nu_l}{m_l}$. For every $\epsilon>0$ there exists $D_{k,l,\epsilon}>0$ such that for all $(p_0,q_0)\in\Omega_\Lambda$ satisfying $\|(\omega_k,\omega_l)\|>D_{k,l,\epsilon}$ we have (in the notation of Definition \ref{def:integral curves})
$$\lambda\left(\left\{t\in[0,1]\mid q^0_k(t)-q^0_l(t)\in B_R(0)\right\}\right)<\epsilon.$$
\end{enumerate}
\end{lemma}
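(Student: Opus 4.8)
The plan is to reduce everything to the explicit sinusoidal solutions of the uncoupled harmonic oscillators. For the free Hamiltonian $H_\Lambda^0$, each pair of coordinates $(p_{l,i},q_{l,i})$ evolves by a rotation with angular frequency $\omega_l:=\sqrt{\nu_l/m_l}$ in the rescaled phase-plane coordinates $(p_{l,i}/\sqrt{m_l},\sqrt{\nu_l}\,q_{l,i})$ — this is exactly the decomposition used in the proof of Lemma \ref{lemm: simple time evolution}. Concretely $q^0_{l,i}(t)=A_{l,i}\cos(\omega_l t+\phi_{l,i})$ for an amplitude $A_{l,i}$ and phase $\phi_{l,i}$ determined by the initial data, and similarly for $k$. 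The quantity $q^0_k(t)-q^0_l(t)\in\R^n$ is thus, componentwise, a sum of two sinusoids. The strategy is: (a) show that the relevant "largeness" hypothesis forces the amplitude of this difference-signal to be large, and (b) show that a (sum of) sinusoid(s) with large amplitude spends only a small fraction of the unit time interval inside the fixed ball $B_R(0)$.

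Step (b) is the clean part and I would isolate it first as an elementary fact. \emph{Case 1 ($\omega_k=\omega_l=:\omega$).} Here every component of $q^0_k(t)-q^0_l(t)$ is a single sinusoid $C_i\cos(\omega t+\psi_i)$ with a \emph{common} frequency $\omega$, so the difference vector traces an ellipse-like curve; one checks that $\|q^0_k(t)-q^0_l(t)\|\ge \rho$ outside a union of at most $\lceil\omega/\pi\rceil+1$ intervals each of length $O(R/\rho)$ (the sine is close to a zero of amplitude $\rho$ only on a short time-window, by its derivative bound $|\tfrac{d}{dt}\cos|\le\omega$ — actually by invertibility of $\sin$ near its zeros), where $\rho$ is the amplitude of the difference curve. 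Choosing $\rho$ large makes this measure $<\epsilon$. Then I must connect $\rho$ to the hypothesis $\|\tfrac{(p_0)_k}{m_k}-\tfrac{(p_0)_l}{m_l}\|^2+\|(q_0)_k-(q_0)_l\|^2>D$: since the relative motion $q^0_k(t)-q^0_l(t)$ is itself governed by a harmonic oscillator of frequency $\omega$ with conserved energy $\propto \|\tfrac{1}{m_k}(p_0)_k-\tfrac1{m_l}(p_0)_l\|^2 + \omega^2\|(q_0)_k-(q_0)_l\|^2$ (this uses $\omega_k=\omega_l$ crucially, so the relative coordinate closes into an autonomous system), large $D$ forces large amplitude $\rho$, and we set $D_{k,l,\epsilon}$ accordingly. \emph{Case 2 ($\omega_k\neq\omega_l$).} Now each component is $A_{k,i}\cos(\omega_k t+\phi_{k,i})-A_{l,i}\cos(\omega_l t+\phi_{l,i})$, a genuine two-frequency quasi-periodic signal; the hypothesis $\|(p_0)_k\|^2+\|(p_0)_l\|^2+\|(q_0)_k\|^2+\|(q_0)_l\|^2>D$ says the \emph{total} energy of the $k$- and $l$-oscillators is large, hence at least one of the amplitudes $A_{k,i}$ or $A_{l,i}$ is $\ge c\sqrt{D}$ for a dimensional constant $c$. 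I would argue that the set $\{t:|q^0_{k,i}(t)-q^0_{l,i}(t)|<R\}$ for that component $i$ has small measure: if, say, $A_{k,i}$ is the large one, then $q^0_{k,i}$ alone leaves any band of width $2(R+A_{l,i}')$ except on a small-measure set provided $A_{k,i}$ dominates — but since $A_{l,i}$ need not be small, the honest route is to note that $\omega_k\ne\omega_l$ means $\frac{d}{dt}(q^0_{k,i}-q^0_{l,i})$ cannot vanish identically and the signal is analytic and $1$-quasi-periodic with both amplitudes fixed, so the sublevel set $\{|{\cdot}|<R\}$ has a uniform bound on its measure in terms of $R$ and $\max(A_{k,i},A_{l,i})$ that tends to $0$ as the latter $\to\infty$. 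Making this last estimate uniform and quantitative is, I expect, the main obstacle — one wants a bound of the form $\lambda(\{t\in[0,1]:|a\cos(\omega_k t+\alpha)-b\cos(\omega_l t+\beta)|<R\})\le F(R/\max(a,b))$ with $F(s)\to0$ as $s\to0$, uniform over the phases $\alpha,\beta$, over $a,b\ge0$, and over $\omega_k,\omega_l$ ranging in the \emph{finite} set $\{\sqrt{\nu_j/m_j}:j\in\Lambda\}$ with $\omega_k\ne\omega_l$; the finiteness of $\Lambda$ and assumption (v) of Assumption \ref{conditions} keep the frequencies in a fixed compact set bounded away from coincidence, which is what allows $D_{k,l,\epsilon}$ to be chosen independently of the initial point.

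Finally I would assemble: given $\epsilon$, pick the amplitude threshold $\rho_\epsilon$ (resp. $A_\epsilon$) from step (b) guaranteeing measure $<\epsilon$, then translate it back through the energy identities of step (a) into the stated $D_{k,l,\epsilon}$, and conclude by taking a union bound over the $n$ components (replacing $\epsilon$ by $\epsilon/n$ at the outset). I would double-check the one genuinely delicate point — that in Case 1 the relative coordinate $q^0_k-q^0_l$ really does evolve autonomously, which is false when $\omega_k\ne\omega_l$ and is exactly why the two cases need different hypotheses (relative energy versus total energy).
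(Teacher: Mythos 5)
Your plan is viable and takes a genuinely different route from the paper. The paper proves both parts by contradiction: for part 1 it uses that, when $\frac{\nu_k}{m_k}=\frac{\nu_l}{m_l}$, the relative data $\bigl(\frac{p_k}{m_k}-\frac{p_l}{m_l},\,q_k-q_l\bigr)$ evolve by a rotation, so if the relative position stays in $B_R(0)$ on a nontrivial interval then the initial relative norm is bounded by a constant depending only on $R$, contradicting the largeness hypothesis; for part 2 it takes a sequence of putative counterexamples, rescales by $c_j=\max(\|(\omega_0^j)_k\|,\|(\omega_0^j)_l\|)\to\infty$ (using linearity of the free flow), extracts a convergent subsequence by compactness, and concludes that two sinusoids with distinct frequencies would have to coincide on a nontrivial interval, hence everywhere---a contradiction. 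Your route is direct and quantitative: conservation of the relative energy in Case 1, and largeness of a single component amplitude plus a sublevel-set estimate for a two-frequency signal in Case 2. This buys an explicit $D_{k,l,\epsilon}$, whereas the paper's soft compactness argument gives no explicit constant but completely sidesteps the one estimate you yourself flag as the main obstacle.

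Two repairs are needed to make your plan complete. First, the uniform bound you postulate in Case 2, $\lambda\bigl(\{t\in[0,1]:|a\cos(\omega_k t+\alpha)-b\cos(\omega_l t+\beta)|<R\}\bigr)\le F\bigl(R/\max(a,b)\bigr)$ with $F(s)\to0$, is indeed true for the two fixed frequencies $\omega_k\neq\omega_l$ (no uniformity over frequencies is required, since $D_{k,l,\epsilon}$ is allowed to depend on $k,l$; invoking finiteness of $\Lambda$ and Assumption (v) here is unnecessary), but it must actually be proved: the cheapest proof is precisely the paper's mechanism---normalize by $\max(a,b)$, observe that the normalized signals form a family over a compact parameter set, and show that a sequence of counterexamples would force a nonzero analytic two-frequency signal to vanish on a set of positive measure, hence on an interval, hence identically, which is impossible. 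Without this (or an explicit computation), Case 2 is asserted rather than proved. Second, the closing ``union bound over the $n$ components with $\epsilon/n$'' is the wrong reduction: a union bound would require every component amplitude to be large, which the hypothesis does not give; you only know that some component of some particle has amplitude of order $\sqrt D$. Use instead the inclusion $\{t:\|q^0_k(t)-q^0_l(t)\|<R\}\subseteq\{t:|q^0_{k,i}(t)-q^0_{l,i}(t)|<R\}$ for that single large-amplitude component $i$; the one-dimensional estimate for that component alone yields the measure bound. With these two points addressed, your argument goes through, and in Case 1 it rests on the same observation as the paper's: the relative coordinate is an autonomous harmonic oscillator exactly when the frequencies agree, which is why the two cases carry different hypotheses.
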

\begin{proof}
We may throughout the proof assume that $\Lambda=\{k,l\}$. Moreover, we leave out the superscript $0$ referring to the noninteracting dynamics. The whole proof is in terms of the flow of $H^0_{\{k,l\}}$.

 \paragraph{Proof of 1.}
Suppose the statement does not hold, so there exists an $\epsilon>0$ such that for all $D$ there exist $(p_0,q_0)\in\Omega_\Lambda$ satisfying 
\begin{align}\label{eq:ongerijmde}
\sqrt{\Big\|\frac{(p_0)_k}{m_k}-\frac{(p_0)_l}{m_l}\Big\|^2+\|(q_0)_k-(q_0)_l\|^2}>D
\end{align}
and such that \eqref{eq:measure epsilon} does not hold.
 The function $\R\to\R^d$, $t\mapsto q_k(t)-q_l(t)$ is continuous and periodic with some period independent of the initial conditions, so the set in the argument of $\lambda$ in \eqref{eq:measure epsilon} contains a nontrivial interval $[a,b]$ of length $|b-a|\geq N\epsilon$, for a number $N$ independent of $(p_0,q_0)$. For all $t\in [a,b]$ we have
 $\|q_k(t)-q_l(t)\|\leq R$,
 so that on account of \eqref{hamilton equations}
\begin{align*}
\|(\frac{p_k(t)}{m_k}-\frac{p_l(t)}{m_l})-(\frac{p_k(a)}{m_k}-\frac{p_l(a)}{m_l})\|&=\|\int_a^t(\frac{\nu_kq_k(s)}{m_k}-\frac{\nu_lq_l(s)}{m_l})ds\|\\
&\leq \frac{\nu_k}{m_k} R|b-a|,
\end{align*}
 for all $t\in[a,b]$. Hence, by twice applying the reverse triangle inequality,
 \begin{align*}
&\|q_k(b)-q_l(b)\|\\
&\geq\|\int_a^b\big(\frac{p_k(t)}{m_k}-\frac{p_l(t)}{m_l}\big)dt\|-\|q_k(a)-q_l(a)\|\\
&\geq\|\int_a^b\big(\frac{p_k(a)}{m_k}-\frac{p_l(a)}{m_l}\big)dt\|-\|\int_a^b\Big(\big(\frac{p_k(t)}{m_k}-\frac{p_l(t)}{m_l}\big)-\big(\frac{p_k(a)}{m_k}-\frac{p_l(a)}{m_l}\big)\Big)dt\|\\
&\quad-R\\
&\geq|b-a|\|\frac{p_k(a)}{m_k}-\frac{p_l(a)}{m_l}\|-|b-a|^2\frac{\nu_k}{m_k}R-R.
 \end{align*}
We obtain the bound
$$\Big\|\frac{p_k(a)}{m_k}-\frac{p_l(a)}{m_l}\Big\|\leq \frac{2R+|b-a|^2\frac{\nu_k}{m_k}R}{|b-a|}\leq \frac{2R}{\epsilon N}+|b-a|\frac{\nu_k}{m_k}R.$$
Hence,
\begin{align}\label{eq:difference energy} \Big\|\frac{p_k(a)}{m_k}-\frac{p_l(a)}{m_l}\Big\|^2+\|q_k(a)-q_l(a)\|^2\leq \big(\frac{2R}{\epsilon N}+\frac{\nu_k}{m_k}R\big)^2+R^2,
 \end{align}
since $|b-a|\leq 1$. 
 Because $(\frac{p_k(t)}{m_k},q_k(t))$ and $(\frac{p_l(t)}{m_l},q_l(t))$ are both solutions of the same simple harmonic oscilator, $(\frac{p_k(a)}{m_k}-\frac{p_l(a)}{m_l},q_k(a)-q_l(a))$ is, up to a constant scaling, a rotation of $(\frac{(p_0)_k}{m_k}-\frac{(p_0)_l}{m_l},(q_0)_k-(q_0)_l)$, so $\|\frac{(p_0)_k}{m_k}-\frac{(p_0)_l}{m_l}\|^2+\|(q_0)_k-(q_0)_l\|^2$ is bounded by the same constant as \eqref{eq:difference energy}, multiplied by another constant independent of $(p_0,q_0)$. But this contradicts our assumption that \eqref{eq:ongerijmde} has to hold for all $D$. We therefore obtain the result.
%

\paragraph{Proof of 2.}
We suppose the statement does not hold. This implies the existence of an $\epsilon>0$ and a sequence $(\omega^j_0)_{j=1}^\infty=((p^j_0,q^j_0))_{j=1}^\infty\subseteq\Omega_\Lambda=\Omega_{\{k,l\}}$, such that
\begin{align}\label{eq:omega difference diverges}
    \lim_{j\to\infty}\sqrt{\|(\omega_0^j)_k\|^2+\|(\omega_0^j)_l\|^2}=\infty
\end{align}
and
$$\lambda\left(\left\{t\in[0,1]\mid (q^j)_k(t)-(q^j)_l(t)\in B_R(0)\right\}\right)\geq\epsilon.$$
As $\R\to\R^d$, $t\mapsto (q^j)_k(t)-(q^j)_l(t)$ is a sum of two harmonic functions with two positive periods that are independent of $j$, the set above (the argument of $\lambda$) is a union of $N$ open intervals $(a^j_1,b^j_1),\ldots,(a^j_N,b^j_N)$, where $N$ does not depend on $j$. We denote $I^j=(a^j_1,b^j_1,\ldots,a^j_N,b^j_N)\in[0,1]^N$. We set
$$c_j:=\|\omega^j_0\|=\sqrt{\|(\omega^j_0)_k\|^2+\|(\omega^j_0)_l\|^2},$$
which we may assume to be nonzero because $c_j\to\infty$.
We also set
$$\bar\omega^j_0:=\frac{1}{c_j}\omega^j_0.$$
According to the Hamiltonian equations for $H^0_{\{k,l\}}$ (two harmonic oscillators) we obtain
$$\bar\omega^j(t)=\frac{1}{c_j}\omega^j(t).$$
It also follows that $\|\bar\omega^j_0\|= 1$.

Because of the above construction, the sequence $((\bar\omega_0^j,I^j))_{j=1}^\infty$ takes values in a compact set, and so admits a convergent subsequence $((\bar\omega_0^{j_i},I^{j_i}))_{i=1}^\infty$ whose limit we denote $(\bar\omega_0,I)$, and we write $\bar\omega_0=(\bar p_0,\bar q_0)$ and $I=(a_1,b_1,\ldots,a_N,b_N)$. We obtain
\begin{align*}
    \lambda(I)\geq\epsilon,
\end{align*}
which implies that there is a nontrivial open interval $(a_m,b_m)$ for some $m\in\{1,\ldots,N\}$. For all $t\in (a_m,b_m)$, we obtain, for $i\in\mathbb{Z}_{\geq 0}$ large enough,
$$\bar q^{j_i}_k(t)-\bar q^{j_i}_l(t)\in \frac{1}{c_{j_i}} B_R(0).$$
As $c_{j_i}\to\infty$, we obtain, for all $t\in(a_m,b_m)$,
$$\bar q_k(t)=\bar q_l(t).$$
As $\bar q_k$ and $\bar q_l$ are both (scaled and shifted) sine functions, the fact that they coincide on a nontrivial interval implies that they are equal everywhere. This implies that their frequencies are equal, which contradicts our assumption.
\end{proof}

We shall make repeated use of the following formulation of Gronwall's inequality, which is well-known (see e.g. \cite{Ralph_Howard}).

 \begin{lemma}[Gronwall's inequality]\label{lem:Gronwall's inequality}
 Let $t_0<t_1\in\R$, let $m\in \mathbb{Z}_{\geq 0}$, and let $U\subset\R^m$ be open.  let $F,G: [t_0,t_1]\times U \to\R^{m}$  be  continuous functions, and let $y:[t_0,t_1]\to \R^m$,  $z:[t_0,t_1]\to \R^m$, satisfy the initial value problems:
 \begin{align*}
     &\frac{d}{dt}y(t)=F(t,y(t)); \ \ y(t_0)=y_0\\
     &\frac{d}{dt}z(t)=G(t,z(t)); \ \ z(t_0)=z_0.
 \end{align*}
 Assume there is a constant  $C\geq 0$ such that
 $$||G(t,x)-G(t,x')||\leq C ||x-x'||,$$
 and a continuous function $\varphi:[t_0,t_1]\to [0,\infty)$ such that
 $$||F(t,y(t))-G(t,y(t))||\leq \varphi(t).$$
 Then, for all $t\in [t_0,t_1]$,
 \begin{align*}
     ||y(t)-z(t)||\leq e^{C|t-t_0|}||y_0-z_0||+e^{C|t-t_0|}\int_{t_0}^te^{-C|s-t_0|}\varphi(s)ds.
 \end{align*}
 \end{lemma}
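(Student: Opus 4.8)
The plan is to prove Gronwall's inequality by introducing the auxiliary function $u(t) := \|y(t) - z(t)\|$ and bounding its growth. First I would write, for $t \in [t_0, t_1]$, the integral form of the two initial value problems: $y(t) = y_0 + \int_{t_0}^t F(s, y(s))\,ds$ and $z(t) = z_0 + \int_{t_0}^t G(s, z(s))\,ds$. Subtracting, inserting and removing the term $G(s, y(s))$, and applying the triangle inequality gives
\begin{align*}
\|y(t) - z(t)\| \le \|y_0 - z_0\| + \int_{t_0}^t \|F(s,y(s)) - G(s,y(s))\|\,ds + \int_{t_0}^t \|G(s,y(s)) - G(s,z(s))\|\,ds.
\end{align*}
Using the hypotheses $\|F(s,y(s)) - G(s,y(s))\| \le \varphi(s)$ and the Lipschitz bound $\|G(s,y(s)) - G(s,z(s))\| \le C\|y(s) - z(s)\|$, this becomes
\begin{align*}
u(t) \le \|y_0 - z_0\| + \int_{t_0}^t \varphi(s)\,ds + C\int_{t_0}^t u(s)\,ds.
\end{align*}

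Next I would apply the standard integral form of Gronwall's lemma to the inequality $u(t) \le \psi(t) + C\int_{t_0}^t u(s)\,ds$, where $\psi(t) := \|y_0 - z_0\| + \int_{t_0}^t \varphi(s)\,ds$ is nondecreasing. The classical argument multiplies the integrating factor $e^{-C(t - t_0)}$ against $R(t) := \int_{t_0}^t u(s)\,ds$, observes that $\frac{d}{dt}\big(e^{-C(t-t_0)} R(t)\big) = e^{-C(t-t_0)}(u(t) - C R(t)) \le e^{-C(t-t_0)}\psi(t)$, integrates from $t_0$ to $t$, and then resubstitutes into $u(t) \le \psi(t) + C R(t)$; monotonicity of $\psi$ lets one pull $\psi$ out of the resulting integral, yielding $u(t) \le \psi(t) e^{C(t - t_0)}$. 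Spelling out $\psi$, this gives $\|y(t) - z(t)\| \le e^{C(t-t_0)}\|y_0 - z_0\| + e^{C(t-t_0)}\int_{t_0}^t \varphi(s)\,ds$, which after bounding $\int_{t_0}^t \varphi(s)\,ds \le \int_{t_0}^t e^{C(t_0 - s)}\varphi(s)\,ds \cdot e^{C(t - t_0)}$... — actually, to match the stated conclusion exactly, one should instead track the sharper estimate $\frac{d}{dt}(e^{-C(t-t_0)}R(t)) \le e^{-C(t-t_0)}\varphi(t) + C e^{-C(t-t_0)}\|y_0-z_0\|$ is not quite it either; the cleanest route to the displayed form with base point $t_1$ is to redo the Duhamel computation with $t_1$ as the reference time and absorb the forcing term through the variation-of-constants formula, giving $\|y(t) - z(t)\| \le e^{C|t-t_1|}\|y_0 - z_0\| + e^{C|t-t_1|}\int_{t_1}^t e^{-C|s-t_1|}\varphi(s)\,ds$.

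For the version with general reference point $t_1$ rather than $t_0$, and to handle both $t \le t_1$ and $t \ge t_1$ uniformly (which is why absolute values appear), I would run the Duhamel/variation-of-constants argument treating $v(t) := e^{-C|t-t_1|}u(t)$ and showing it satisfies a differential inequality whose integration from $t_1$ produces exactly the claimed bound; alternatively one reduces to the $t_0$-based case by noting the estimate is symmetric under reversing time. I expect the main obstacle to be purely bookkeeping: getting the base point $t_1$ and the absolute-value signs in the exponents to come out precisely as stated, since the naive Gronwall argument naturally produces the bound anchored at the initial time $t_0$, and one must be slightly careful about signs and the direction of integration when $t < t_1$. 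There is no conceptual difficulty — the Lipschitz hypothesis on $G$ (not $F$) is exactly what makes the single application of the scalar Gronwall lemma go through, and continuity of $F$, $G$, $\varphi$ guarantees all integrals are well-defined.
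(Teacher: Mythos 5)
The paper never proves this lemma --- it is quoted as a known comparison form of Gronwall's inequality with a pointer to Howard's notes --- so the only question is whether your argument is sound. Your route (integral form of both IVPs, insert-and-subtract $G(s,y(s))$, then a scalar Gronwall step) is the standard and intended one, but as written the decisive step only yields a strictly weaker bound: pulling the nondecreasing $\psi(t)=\|y_0-z_0\|+\int_{t_0}^t\varphi(s)\,ds$ out of the integral gives $u(t)\le e^{C(t-t_0)}\bigl(\|y_0-z_0\|+\int_{t_0}^t\varphi(s)\,ds\bigr)$, whereas the stated conclusion carries the extra damping factor $e^{-C|s-t_0|}$ inside the integral and is therefore sharper; you notice the mismatch, but the ``redo the Duhamel computation'' step that is supposed to close it is asserted (after a visible false start) rather than carried out. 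The fix is one line and stays inside your own computation: set $R(t):=\|y_0-z_0\|+\int_{t_0}^t\bigl(\varphi(s)+Cu(s)\bigr)\,ds$, so that $u\le R$, $R(t_0)=\|y_0-z_0\|$ and $R'(t)=\varphi(t)+Cu(t)\le\varphi(t)+CR(t)$; hence $\frac{d}{dt}\bigl(e^{-C(t-t_0)}R(t)\bigr)\le e^{-C(t-t_0)}\varphi(t)$, and integrating from $t_0$ gives $u(t)\le R(t)\le e^{C(t-t_0)}\|y_0-z_0\|+e^{C(t-t_0)}\int_{t_0}^t e^{-C(s-t_0)}\varphi(s)\,ds$ for $t\ge t_0$, with the time-reversed run supplying the absolute-value version.

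A second point, which is partly the paper's fault: the base point $t_1$ in the displayed conclusion (and the stray interval $[a,b]$) is a transcription slip --- the initial data sit at $t_0$, and the estimate should read with $t_0$ in place of $t_1$, exactly as in the cited source. Your proposed remedy of ``anchoring the argument at $t_1$'' cannot work literally: a Gronwall argument based at $t_1$ produces $\|y(t_1)-z(t_1)\|$, which is not among the hypotheses, not $\|y_0-z_0\|$. Once that typo is repaired, your sketch is the right statement to prove, and the computation above proves it; note also that the Lipschitz hypothesis is only used for $G$, as you correctly emphasize.
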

Lemma \ref{Lemm: measure zero} and Gronwall's inequality enable us to prove the following result. We recall that $\mN\subseteq\Lambda\times\Lambda$ is a fixed set satisfying \eqref{eq:mN}.
\begin{proposition}\label{Prop: fundamental}
For all $\{k,l\}\in\mN$ and all $\epsilon>0$ there exists $D_{k,l,\epsilon}>0$, such that for all $\omega\in\Omega_\Lambda$ satisfying either
\begin{enumerate}[(i)]
    \item\label{item:1 prop Dklepsilon} $\|\pi^{\mom,\pos}_{kl}(\omega)\|>D_{k,l,\epsilon}$ if $\frac{\nu_k}{m_k}=\frac{\nu_l}{m_l}$;
    \item\label{item:2 prop Dklepsilon} $\|(\omega_k,\omega_l)\|>D_{k,l,\epsilon}$ if $\frac{\nu_k}{m_k}\neq\frac{\nu_l}{m_l}$,
\end{enumerate}
we have, for all $t\in[0,1]$,
\begin{align}\label{statement}
    \|\Phi^{t}_{H_\mathcal{N}}(\omega)-\Phi^{t}_{H_{\mathcal{N}-kl}}(\omega)\|<\epsilon.
\end{align}
\end{proposition}

\begin{proof}
We are going to apply Gronwall's inequality to the following data.
We hereto introduce three time independent vector fields, whose components are defined by
\begin{align}
&F:=X_{H_\Lambda^{0}}:(p,q)\mapsto \Big(-\nabla_qH_\Lambda^0(p,q),\nabla_pH_\Lambda^0(p,q)\Big);\label{IVP f}\\
&G:=X_{H_\mN}:(p,q)\mapsto\Big(-\nabla_q H_\mathcal{N}(p,q),\nabla_p H_\mathcal{N}(p,q)\Big);\label{IVP g}\\
&\widetilde{G:}=X_{H_{\mN-kl}}:(p,q)\mapsto \Big(-\nabla_qH_{\mathcal{N}-kl}(p,q),\nabla_pH_{\mathcal{N}-kl}(p,q)\Big)\label{IVP gb},
\end{align}
with $p,q\in \R^{|\Lambda|d}$. As before, we denote the (necessarily unique) flows corresponding to the Hamilton  vector fields above, respectively,  by $\omega^0$, $\omega$, and $\widetilde{\omega}$, and remind that the initial points coincide at time $t=0$, i.e. $\omega(0)=\omega^0(0)=\tilde{\omega}(0)=(p_0,q_0)$. On account of Lemma \ref{lem:Gronwall's inequality} and our conditions on the potential $V_\mathcal{N}$ (cf. \NAK\ref{subsection: Hamiltonian}), we may estimate
\begin{align*}
    &||G(\omega)-G(\omega')||\leq |\Lambda| \max_{k}\Big|\frac{p_k}{m_k}-\frac{{{p_k'}}}{m_k}\Big|+ |\Lambda|\max_{k}|\nu_kq_k-\nu_k{q_k'}|\\ &+\sum_{\{k,l\}\in\mathcal{N}}C_{kl}(||q_k-q_k'||+|q_l-q_l'||)\\
    &\leq C_1||p-p'||_\infty+C_2||q-q'||_\infty + C_3||q-q'||_\infty\\&\leq 
    C_G||\omega-\omega'||,
\end{align*}
for suitable positive constants $C_1,C_2,C_3,C_{kl}$ and $C_G$. 
Similar estimates yield
\begin{align*}
||\widetilde{G}(\omega)-\widetilde{G}(\omega')||\leq C_{\widetilde G}||\omega-\omega'||.
\end{align*}
Notice that 
$$||F(\omega^0(t))-G(\omega^0(t))||_\infty\leq \|\nabla_qV_{\mathcal{N}}\|_\infty<\infty,$$ as $\mathcal{N}$ is finite. The same estimate holds for  $||F(\omega^0(t))-\widetilde G(\omega^0(t))||$. If we then define
$d_1:=||\nabla_q V_\mathcal{N}||_\infty$, we may apply Gronwall's inequality (cf. Lemma \ref{lem:Gronwall's inequality}) and obtain
\begin{align}\label{def d}
    &||\pi^\pos_{kl}(q^0(t))-\pi^\pos_{kl}(q(t))||\leq 2||\omega^0(t)-\omega(t)||\leq 2e^Cd_1=:\Delta;
\end{align}
and notice again that the same estimate holds for $||\pi^\pos_{kl}(q^0(t))-\pi^\pos_{kl}(\tilde q(t))||$.
These estimates allow us to apply Lemma \ref{Lemm: measure zero} and estimate the norm difference $||\omega(t)-\tilde\omega(t)||$, by comparing these flows with $\omega^0(t)$. To this end, we fix an $r>0$ such that  $\supp V_{kl}\subseteq B_r(0)$, the ball around zero with radius $r$. We set $R:=\Delta+r$, where $d$ is defined through \eqref{def d}. It follows that 
\begin{align}
    &t\notin E_{kl}:=\left\{t\in[0,1]\mid \pi^\pos_{kl}(q^0(t))\in B_R(0)\right\}\Rightarrow\nonumber \\ &V_{kl}(\pi^\pos_{kl}(q(t)))=V_{kl}(\pi^\pos_{kl}(\tilde{q}(t)))=0.\label{implication}
\end{align}

In order to estimate both flows, we employ Lemma \ref{Lemm: measure zero}. For the current $k,l\in\Gamma$, the above defined $R$, and a given $\epsilon>0$, Lemma \ref{Lemm: measure zero} provides a $D_{k,l,\epsilon}>0$ such that, for $(p_0,q_0)$ under either assumption \eqref{item:1 prop Dklepsilon} or \eqref{item:2 prop Dklepsilon}, we have 
$$\lambda(E_{kl})<\epsilon.$$
In order to apply Gronwall's inequality, we consider the characteristic function $\chi_{E_{kl}}$ of $E_{kl}$. By standard results, there exists a continuous function $\tilde\chi_{kl}\geq\chi_{kl}$ such that $\int_0^1\tilde\chi_{kl}<2\epsilon$. For $t\in E_{kl}$, $\tilde\chi_{kl}(t)\geq 1$, and if $t\notin E_{kl}$, it follows on account of $\eqref{implication}$ that $||\nabla_q V_{kl}(q_k(t)-q_l(t))||=0$. Hence, for any $t\in[0,1]$ we have
\begin{align*}
||G(\omega(t))-\widetilde G(\omega(t))||&\leq 2||\nabla_q V_{kl}(q_k(t)-q_l(t))|| \\
&\leq 4||\nabla V_{kl}||_\infty \tilde\chi_{kl}(t).
\end{align*}
We now let 
$$\varphi(t):=4||\nabla V_{kl}||_\infty \tilde\chi_{kl}(t),$$ we can apply Lemma \ref{lem:Gronwall's inequality}, using $\omega(0)=\omega^0(0)=\tilde{\omega}(0)$, and find
\begin{align*}
    ||\omega(t)-\widetilde{\omega}(t)||\leq& e^{C_Gt}\int_0^te^{-C_Gs}\varphi(s)ds\\
    \leq& 4\|\nabla V_{kl}\|_\infty e^{C_G}\int_0^t\tilde\chi_{kl}(s)ds\\
    \leq&8\|\nabla V_{kl}\|_\infty e^{C_G}\epsilon.
\end{align*}
By rescaling $\epsilon$ we find $||\omega(t)-\widetilde{\omega}(t)||<\epsilon,$ as desired.
\end{proof}

The final step in the proof is to cover phase space with open regions on which $\Phi_{H_\mN}$ has an understood behavior. The open regions need to be chosen in a ``linear'' fashion, in order to obtain a subordinate partition of unity consisting of elements of $\CR(\Omega_\Lambda)$. Thus we will patch together functions in $\CR(\Omega_\Lambda)$ making something that approximates $g\circ \Phi_{H_\mN}^{t}$ for an arbitrary $g\in \CR(\Omega_\Lambda)$.

In the following, we fix $g\in\CR(\Omega_\Lambda)$ and $\epsilon>0$. We also fix $\delta>0$
such that $$||\omega-\omega'||<\delta\Rightarrow |g(\omega)-g(\omega')|< \epsilon\qquad (\omega,\omega'\in\Omega_\Lambda),$$ which can be done as $\CR(\Omega_\Lambda)$ is inside the uniformly continuous functions. 

Proposition \ref{Prop: fundamental} is translated into algebraic language as follows. For every $\{k,l\}\in\mathcal{N}$, we may fix a $D_{k,l,\delta}\equiv D_{kl}>0$ (whose dependence on $\delta$ is omitted) with the property that
\begin{align}\label{uniform continuity}
    \sup_{\omega\in U_{kl}}|(\Phi_{H_\mN}^{t})^*g(\omega)-(\Phi_{H_{\mN-{kl}}}^{t})^*g(\omega)|< \epsilon,
\end{align}
where $U_{kl}$ is defined by 

\begin{align}\label{eq:Ukl}
U_{kl}&:=\begin{cases}
\{\omega\in\Omega_\Lambda:~\|\pi_{kl}^{\mom,\pos}(\omega)\|>D_{kl}\},\qquad&\text{if $\frac{\nu_k}{m_k}=\frac{m_l}{\nu_l}$}\,;\\
\{\omega\in\Omega_\Lambda:~\|(\omega_k,\omega_l)\|>D_{kl}\},\qquad&\text{if $\frac{\nu_k}{m_k}\neq\frac{m_l}{\nu_l}$}\,.
\end{cases}
\end{align}
We moreover define
\begin{align}\label{eq:Wkl}
W_{kl}&:=\begin{cases}
\{\omega\in\Omega_\Lambda:~\|\pi_{kl}^{\mom,\pos}(\omega)\|>2D_{kl}\},\qquad&\text{if $\frac{\nu_k}{m_k}=\frac{m_l}{\nu_l}$}\,;\\
\{\omega\in\Omega_\Lambda:~\|(\omega_k,\omega_l)\|>2D_{kl}\},\qquad&\text{if $\frac{\nu_k}{m_k}\neq\frac{m_l}{\nu_l}$}\,.
\end{cases}
\end{align}
as well as
\begin{align*}
&U_{\infty}:=\left\{\omega\in\Omega_\Lambda:\hspace*{-0.1cm}\begin{array}{l}
\|\pi_{kl}^{\mom,\pos}(\omega)\|<4D_{kl}\textnormal{ for all }\{k,l\}\in\mathcal N\text{ with $\frac{\nu_k}{m_k}=\frac{m_l}{\nu_l}$}\\
\|(\omega_k,\omega_l)\|<4D_{kl}\textnormal{ for all }\{k,l\}\in\mathcal N\text{ with $\frac{\nu_k}{m_k}\neq\frac{m_l}{\nu_l}$}
\end{array}\hspace*{-0.1cm}\right\};\\\\
&W_{\infty}:=\left\{\omega\in\Omega_\Lambda:\hspace*{-0.1cm}\begin{array}{l}
\|\pi_{kl}^{\mom,\pos}(\omega)\|<3D_{kl}\textnormal{ for all }\{k,l\}\in\mathcal N\text{ with $\frac{\nu_k}{m_k}=\frac{m_l}{\nu_l}$}\\ 
\|(\omega_k,\omega_l)\|<3D_{kl}\textnormal{ for all }\{k,l\}\in\mathcal N\text{ with $\frac{\nu_k}{m_k}\neq\frac{m_l}{\nu_l}$}
\end{array}\hspace*{-0.1cm}\right\}.
\end{align*}

Note that $\{U_{Y}\}_{Y\in\mathcal{N}\cup\{\infty\}}$ and $\{W_{Y}\}_{Y\in\mathcal{N}\cup\{\infty\}}$ are open covers satisfying $W_{Y}\subset U_{Y}$ for all $Y\in\mathcal{N}\cup\{\infty\}$.
The approximate behavior of $g\circ\Phi^{t}_{H_\mN}$ (recall that $g\in \CR(\Omega_\Lambda)$ is fixed) on $\cup_{Y\in\mathcal{N}}U_Y$ is described by \eqref{uniform continuity}, and the behavior of  $g\circ\Phi^{t}_{H_\mN}$ on $U_\infty$ is described by the following lemma.

\begin{lemma}\label{Lemm: existence function}
    There exists an $f_\infty\in \CR(\Omega_\Lambda)$ that equals $g\circ\Phi^{t}_{H_\mN}$ on $U_\infty$.
\end{lemma}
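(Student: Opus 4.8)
The plan is to manufacture $f_\infty$ by modifying the potential inside $U_\infty$ so that it becomes compactly supported along a suitable symplectic coordinate subspace, and then exploiting that the resulting Hamiltonian flow factorises as a finite-dimensional flow times a linear one. The first ingredient is a uniform orbit-confinement estimate on $U_\infty$: for $\omega\in U_\infty$ and $0\le s\le t$, the ``interaction coordinates'' of $\Phi^s_{H_\mN}(\omega)$ stay in a bounded region. This follows by combining the Gronwall bound \eqref{def d} --- which keeps $\Phi^s_{H_\mN}(\omega)$ within a fixed distance of the free orbit $\Phi^s_{H^0_\Lambda}(\omega)$ for all $s$ --- with the fact, already used in the proof of Lemma \ref{Lemm: measure zero}, that the free (harmonic) flow distorts each $\|\pi_{kl}^{\mom,\pos}(\cdot)\|$ and each $\|(\cdot_k,\cdot_l)\|$ only by a bounded factor; since on $U_\infty$ these quantities start below $4D_{kl}$, they remain bounded along the orbit. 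Propagating these bounds through the defining inequalities of $U_\infty$ gives in particular that $q_k(s)-q_l(s)$ stays bounded for every $\{k,l\}\in\mathcal N$, and that $(p_k(s),q_k(s))$ stays bounded for every particle $k$ belonging to an unequal-frequency pair --- hence also for every particle joined to such a $k$ by a chain of equal-frequency pairs.

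Next I would fix a symplectic splitting $\Omega_\Lambda=\ran P\oplus\ran P^\perp$ tailored to this. Let $S\subseteq\Lambda$ consist of all particles lying in some unequal-frequency pair of $\mathcal N$, closed under the relation ``joined by an equal-frequency pair''. Passing to the (symplectic) mass-weighted coordinates $\tilde q_k=\sqrt{m_k}\,q_k$, $\tilde p_k=p_k/\sqrt{m_k}$, let $\ran P^\perp$ be spanned by the centre-of-mass modes of the connected components of the equal-frequency-pair graph on $\Lambda\setminus S$, and let $\ran P$ be its orthogonal complement. The algebraic heart of the matter is that any two particles joined by a chain of equal-frequency pairs necessarily have one common frequency $\omega$, so on each such component $H^0_\Lambda$ restricts to $\tfrac12(\|\tilde p\|^2+\omega^2\|\tilde q\|^2)$, which is invariant under orthogonal rotations of the $\tilde q$-variables; consequently $H^0_\Lambda=H^0_\Lambda|_{\ran P}\oplus H^0_\Lambda|_{\ran P^\perp}$ with both pieces harmonic, while $V_\mN$ --- depending only on the differences $q_k-q_l$, which are unchanged by the centre-of-mass modes --- is a function of the $\ran P$-coordinates alone. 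By Step 1, the $\ran P$-part of every orbit segment $\{\Phi^s_{H_\mN}(\omega):0\le s\le t\}$ with $\omega\in U_\infty$ lies in a fixed bounded subset of $\ran P$.

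Now I would take a smooth cutoff $\theta$ on $\ran P$ that equals $1$ on a bounded set absorbing those $\ran P$-parts (and, via the Gronwall bound applied to $H^0_\Lambda+\theta V_\mN$, also the $\ran P$-parts of the corresponding perturbed orbit segments) and has compact support, and set $\hat V:=\theta\cdot V_\mN$. Then $\hat V$ is a bounded $C^1$ function on $\ran P$ with Lipschitz gradient and compact support, agreeing with $V_\mN$ along all the relevant orbit segments; hence $\Phi^t_{H^0_\Lambda+\hat V}=\Phi^t_{H_\mN}$ on $U_\infty$, and it suffices to show $f_\infty:=g\circ\Phi^t_{H^0_\Lambda+\hat V}\in\CR(\Omega_\Lambda)$. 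Using the splitting, $\Phi^t_{H^0_\Lambda+\hat V}=\psi_t\times L_t$, where $\psi_t=\Phi^t_{H^0_\Lambda|_{\ran P}+\hat V}$ is a flow on the finite-dimensional space $\ran P$ with genuinely compactly supported potential and $L_t=\Phi^t_{H^0_\Lambda|_{\ran P^\perp}}$ is linear. Writing $\psi_t^{\mathrm{lin}}:=\Phi^t_{H^0_\Lambda|_{\ran P}}$ and using once more that a harmonic flow satisfies $\|\psi^{\mathrm{lin}}_s(\xi)\|\ge c\|\xi\|$, one sees that the free orbit avoids $\supp\hat V$ for $\|\xi\|$ large, so $\psi_t=\psi_t^{\mathrm{lin}}$ outside a bounded subset $B$ of $\ran P$. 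Therefore $f_\infty=g\circ(\psi_t^{\mathrm{lin}}\times L_t)+D$, where the first term is $g$ composed with a linear map, hence lies in $\CR(\Omega_\Lambda)$ by Lemma \ref{lemm: simple time evolution}, and $D:=g\circ(\psi_t\times L_t)-g\circ(\psi_t^{\mathrm{lin}}\times L_t)$ vanishes outside $B\times\ran P^\perp$. For fixed $\xi\in\ran P$ the function $\eta\mapsto D(\xi,\eta)$ is a difference of restrictions of $g$ to affine subspaces, precomposed with the linear map $L_t$, hence lies in $\CR(\ran P^\perp)$; and $\xi\mapsto D(\xi,\cdot)$ is continuous for the supremum norm and compactly supported, by uniform continuity of $g$. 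Thus $D\in C_0(\ran P)\otimes\CR(\ran P^\perp)\subseteq\CR(\Omega_\Lambda)$, and $f_\infty$ is the required element.

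The main obstacle is Steps 1--2: establishing the uniform orbit-confinement estimate on $U_\infty$, and --- more delicately --- producing a symplectic decomposition of $\Omega_\Lambda$ in which $H^0_\Lambda$ genuinely block-diagonalises yet whose ``$\ran P$'' part still contains every direction along which $V_\mN$ varies. The latter relies essentially on the observation that an equal-frequency chain forces a single shared frequency (so that the restricted free Hamiltonian is rotation-invariant), with components of three or more particles requiring a little extra bookkeeping. Granting this decomposition, Steps 3--5 are routine: the cutoff, the product structure of the flow, and the elementary facts that restrictions of $\CR$-functions to affine subspaces lie in $\CR$ of the subspace and that $C_0(X)\otimes\CR(Y)\subseteq\CR(X\oplus Y)$ do the rest.
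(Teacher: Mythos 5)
Your argument is correct in substance, but it establishes the $\CR$-membership of $f_\infty$ by a genuinely different mechanism than the paper, even though the underlying splitting of phase space is the same: your $\ran P\oplus\ran P^\perp$ (all coordinates of particles connected to an unequal-frequency pair, plus the relative modes of each equal-frequency component, versus the centre-of-mass modes) is, up to mass-weighting, exactly the paper's decomposition $S\oplus T$ with the idempotents $\pi_S,\pi_T$. The paper then stays purely algebraic: it proves the intertwining relations $\pi_S\Phi^t_{H_\mN}=\Phi^t_{H_\mN}\pi_S$ and $\pi_T\Phi^t_{H_\mN}=\Phi^t_{H^0_\mN}\pi_T$, deduces that the ideal $C_0(S)\hat{\otimes}\CR(T)$ is invariant under $(\Phi^{\pm t}_{H_\mN})^*$, and sets $f_\infty=\tilde g\cdot(\Phi^t_{H_\mN})^*g$ with $\tilde g$ a bump pulled back from $S$; no control of the orbit for $s\in(0,t]$ is needed, only that $U_\infty\subseteq K\times T$ at time zero and that the flow restricted to $S$ is proper. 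You instead (i) prove a uniform orbit-confinement estimate on $[0,t]$ for initial data in $U_\infty$, (ii) cut off the potential so it becomes compactly supported on the nonlinear factor $\ran P$, and (iii) show directly that for such a Hamiltonian the pullback of the flow preserves $\CR(\Omega_\Lambda)$, via $g\circ(\psi_t\times L_t)=g\circ(\psi^{\mathrm{lin}}_t\times L_t)+D$ with $D\in C_0(\ran P)\hat{\otimes}\CR(\ran P^\perp)$. Your route buys an explicit form of $f_\infty$ (linear pullback plus a $C_0\hat{\otimes}\CR$ correction) and isolates a clean general fact --- a perturbation compactly supported on a flow-invariant factor preserves $\CR$ --- at the cost of quantitative estimates (confinement along the whole time interval, the cutoff) that the paper's ideal-theoretic argument avoids. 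Three points should be tightened in a write-up: the confinement must bound the full relative data $\pi^{\mom,\pos}_{kl}$ (momenta as well as positions) and be chained over each equal-frequency component, so that it genuinely bounds the $\ran P$-part of the orbit; the identity $\Phi^t_{H^0_\Lambda+\hat V}=\Phi^t_{H_\mN}$ on $U_\infty$ should be derived from the agreement of the two vector fields on the open set where $\theta\equiv1$ (which contains a neighbourhood of the confined orbit segments) together with uniqueness of solutions, since agreement of the potentials along the orbit alone does not suffice; and the three facts used at the end --- restrictions of $\CR$-functions to affine subspaces lie in $\CR$ of the subspace, sup-norm continuity of the slices $\xi\mapsto g(\xi+\cdot)$, and $C_0(\ran P)\hat{\otimes}\CR(\ran P^\perp)\subseteq\CR(\Omega_\Lambda)$ --- are all true but deserve a line each, all following readily from the generators $h_x^\lambda$.
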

\begin{proof}
For all $k,l\in\Lambda$, let us write $k\sim l$ if there exists a `path' of $N\in\mathbb{Z}_{\geq 0}$ connections $\{k_1,k_2\},\{k_2,k_3\},\ldots,\{k_{N-1},k_N\}\in\mN$ such that $k_1=k$ and $k_N=l$. We can then write $\Lambda=\Lambda_0\cup\Lambda_1\cup\cdots\cup\Lambda_M$ for disjoint subsets $\Lambda_0,\ldots,\Lambda_M$ that satisfy:
\begin{itemize}
    \item $k\nsim l$ for all $k\in\Lambda_i,l\in\Lambda_j$, $i\neq j$;
    \item for all $k,l\in\Lambda_j$, $j\geq1$, we have $k\sim l$ and $\frac{\nu_k}{m_k}=\frac{\nu_l}{m_l}$;
    \item for all $k\in\Lambda_0$ there exists an $l\in\Lambda$ such that $k\sim l$ and $\frac{\nu_k}{m_k}\neq\frac{\nu_l}{m_l}$.
\end{itemize}
By definition, the Hamiltonian $H_\mN$ contains no interactions between $k\in\Lambda_j$, $l\in\Lambda_{j'}$ for distinct $j,j'\in\{0,\ldots,M\}$. The time flow therefore splits as 
\begin{align}\label{eq:splitting of flow in Lambda_j's}
\Phi_{H_\mN}^t(\omega)=(\Phi_{H_{\mN,0}}^t(\pi_{\Lambda_0}(\omega)),\ldots,\Phi_{H_{\mN,M}}^t(\pi_{\Lambda_M}(\omega)))
\end{align} 
on $\Omega_\Lambda=\Omega_{\Lambda_0}\times\cdots\times\Omega_{\Lambda_M}$, where $H_{\mN,j}$ is the restriction of $H_{\mN}$ to $\Omega_{\Lambda_j}$, and consequently $\Phi_{H_{\mN,j}}^t$ is the restriction of $\Phi_{H_\mN}^t$ to $\Omega_{\Lambda_j}$.
Moreover, for a suitably large constant $C>0$ (depending on the numbers $D_{kl}$) we can define 
$$\tilde U_\infty:=\left\{\omega\in\Omega_\Lambda:~
\begin{array}{ l }
\|\pi_{kl}^{\mom,\pos}(\omega)\|<C\text{ for all $k,l\in\Lambda_j$, $j\geq1$},\\
\|\omega_k\|<C\text{ for all $k\in\Lambda_0$}
\end{array}
\right\},$$
so that $U_\infty\subseteq\tilde U_\infty$, as we can apply the triangle inequality $\|\pi_{k_1k_3}^{\mom,\pos}(\omega)\|\leq\|\pi_{k_1,k_2}^{\mom,\pos}(\omega)\|+\|\pi_{k_2,k_3}^{\mom,\pos}(\omega)\|$ at each step on the path from $k\in\Lambda_j$ to $l\in\Lambda_j$, $j\geq1$, and the path lengths are uniformly bounded by $|\Lambda|$. It therefore suffices to prove the lemma for $\tilde U_\infty$ instead of $U_\infty$.
The set $\tilde U_\infty$ splits as
\begin{align*}
\tilde U_\infty=U_{\infty,0}\times U_{\infty,1}\times\cdots\times U_{\infty,M}
\end{align*}
on $\Omega_\Lambda=\Omega_{\Lambda_0}\times\cdots\times\Omega_{\Lambda_M}$, where $\overline{U_{\infty,0}}$ is a compact subset of $\Omega_{\Lambda_0}$ 
and, for $j\geq1$,
\begin{align*}
U_{\infty,j}:=&\{\omega\in\Omega_{\Lambda_j}:~\|\pi_{kl}^{\mom,\pos}(\omega)\|<C\text{ for all }k,l\in\Lambda_j\}.
\end{align*}

For all $j\geq1$ we introduce the linear subspace (of which $U_{\infty,j}$ is a neighborhood)
\begin{align*}
 T_j:=&\{\omega\in \Omega_{\Lambda_j}~:~ \pi_{kl}^{\mom,\pos}(\omega)=0\text{ for all }k,l\in\Lambda_j\},\\
 =&\{(p,q)\in\Omega_{\Lambda_j}~:~\frac{p_k}{m_k}=\frac{p_l}{m_l},~ q_k=q_l\forall k,l\in\Lambda_j\}\\
 =&\{p\in\Omega_{\Lambda_j}^\mom~:~\frac{p_k}{m_k}=\frac{p_l}{m_l}~\forall k,l\in\Lambda_j\}\oplus\{q\in\Omega_{\Lambda_j}^\pos~:~ q_k=q_l~\forall k,l\in\Lambda_j\}\\= &T_{j}^\mom\oplus T_{j}^\pos,
\end{align*}
where we have defined $T_{j}^\mom:=\{p\in\Omega_{\Lambda_j}^\mom~:~\frac{p_k}{m_k}=\frac{p_l}{m_l}~\forall k,l\in\Lambda_j\}$ and $T_{j}^\pos=\{q\in\Omega_{\Lambda_j}^\pos~:~ q_k=q_l~\forall k,l\in\Lambda_j\}$.
Let $|m|:=\sum_{k\in\Lambda_j}m_k$. We define an idempotent $\pi_{T_j}:\Omega_{\Lambda_j}\to T_j$ by $\pi_{T_j}:=\pi_{T_j}^\mom\oplus\pi_{T_j}^\pos$ and
\begin{align}
\pi_{T_j}^\mom(p)&:=\frac{1}{|m|}\sum_{k,l\in\Lambda_j}m_ke_k\otimes p_l\\
\pi_{T_j}^\pos(q)&:=\frac{1}{|m|}\sum_{k,l\in\Lambda_j}e_k\otimes m_lq_l.
\end{align}
It is straightforward to check that $\ran\pi_{T_j}\subseteq T_j$ and that $\pi_{T_j}(p,q)=(p,q)$ for $(p,q)\in T_j$, therefore showing that $\pi_{T_j}$ is a linear idempotent. We also introduce
\begin{align*}
S_j:=\left\{(p,q)\in\Omega_{\Lambda_j}~:~\sum_{k\in\Lambda_j}p_k=0,~\sum_{k\in\Lambda_j}q_k=0\right\}.
\end{align*}
We define an idempotent $\pi_{S_j}:\Omega_{\Lambda_j}\to S_j$ by $\pi_{S_j}:=\pi_{S_j}^\mom\oplus\pi_{S_j}^\pos$ and
\begin{align}
\pi_{S_j}^\mom(p)&:=p-\frac{1}{|m|}\sum_{k,l\in\Lambda_j}m_ke_k\otimes p_l\\
\pi_{S_j}^\pos(q)&:=q-\frac{1}{|m|}\sum_{k,l\in\Lambda_j}e_k\otimes m_lq_l.
\end{align}
As before, one can show that indeed $\ran \pi_{S_j}\subseteq S_j$ and $\pi_{S_j}(p,q)=(p,q)$ for $(p,q)\in S_j$, showing that $\pi_{S_j}$ is also a linear idempotent. It is now an easy computation to check that
\begin{align}\label{eq:S_j+T_j}
S_j\oplus T_j=\Omega_{\Lambda_j}.
\end{align}
Moreover, by a direct computation one observes that
$$\pi_{S_j}X_{H_{\mN,j}}=X_{H_{\mN,j}}\pi_{S_j},$$
where $X_{H_{\mN,j}}:\Omega_{\Lambda_j}\to\Omega_{\Lambda_j}$ denotes the vector field associated to  $H_{\mN,j}$ (cf. \textsection\ref{classical dynamical system resolvent}).
From this it follows that
\begin{align}\label{eq:flow and S_j}
\pi_{S_j}\Phi^t_{H_{\mN,j}}=\Phi^t_{H_{\mN,j}}\pi_{S_j}.
\end{align}
Using the fact that all particles in $T_j$  have the same frequency, a similar computation yields the relation
$$\pi_{T_j}X_{H_{\mN,j}}=X_{H^0_{\mN,j}}\pi_{T_j},$$
and from this it follows that
\begin{align}\label{eq:flow and T_j}
\pi_{T_j}\Phi^t_{H_{\mN,j}}=\Phi^t_{H^0_{\mN,j}}\pi_{T_j}.
\end{align}
We set $S_0:=\Omega_{\Lambda_0}$ and $T_0:=\{0\}\subseteq \Omega_{\Lambda_0}$ and define
$$S:=\bigoplus_{j=0}^M S_j,\qquad T:=\bigoplus_{j=0}^M T_j.$$
It follows from \eqref{eq:S_j+T_j} that $S\oplus T=\Omega_\Lambda$. If we set $\pi_S:={\rm id}_{\Lambda_0}\oplus\pi_{S_1}\oplus\cdots\oplus\pi_{S_M}$ and $\pi_T:=0\oplus\pi_{T_1}\oplus\cdots\oplus\pi_{T_M}$, then $\pi_S$ and $\pi_T$ are linear idempotents with images $S$ and $T$, $\pi_S+\pi_T={\rm id}_{\Omega_\Lambda}$, and from \eqref{eq:flow and S_j}, \eqref{eq:flow and T_j}, and \eqref{eq:splitting of flow in Lambda_j's} it follows that
\begin{align}\label{eq:flow and S and T}
\pi_{S}\Phi^t_{H_\mN}=\Phi^t_{H_\mN}\pi_{S},\qquad \pi_{T}\Phi^t_{H_\mN}=\Phi^t_{H^0_\mN}\pi_{T}.
\end{align}
We note that
$$C_0(S)\hat{\otimes} \CR(T):=\overline\spn\{(g_1\circ\pi_{S})(g_2\circ\pi_{T})~:~g_1\in C_0(S),g_2\in\CR(T)\}$$
is an ideal in $\CR(\Omega_{\Lambda})$.
The next step is to show that
\begin{align}\label{preserved}
    (\Phi_{H_\mN}^{t})^*(C_0(S)\hat{\otimes} \CR(T))\subseteq C_0(S)\hat{\otimes} \CR(T).
\end{align}
To see this, we take $g_1\in C_0(S)$ and $g_2\in\CR(T)$. By using \eqref{eq:flow and S and T} we obtain
\begin{align*}
&(\Phi_{H_{\mN}}^t)^*((g_1\circ\pi_{S})(g_2\circ\pi_{T}))(\omega)\\
&=g_1(\pi_{S}(\Phi_{H_{\mN}}^t(\omega)))g_2(\pi_{T}(\Phi_{H_{\mN}}^t(\omega)))\\
&=g_1(\Phi_{H_{\mN}}^t(\pi_{S}(\omega)))g_2(\Phi_{H^0_{\mN}}^t(\pi_{T}(\omega)))\\
&=((\Phi_{H_{\mN}}^t)^*g_1)(\pi_{S}(\omega))((\Phi_{H^0_{\mN}}^t)^*g_2)(\pi_{T}(\omega))\\
&=(((\Phi_{H_{\mN}}^t)^*g_1)\circ\pi_{S})(((\Phi_{H^0_{\mN}}^t)^*g_2)\circ\pi_{T})(\omega).
\end{align*}
As $(\Phi_{H_{\mN}}^t)^*g_1\in C_0(S)$ (the flow preserves compacts) and $(\Phi_{H^0_{\mN}}^t)^*g_2\in \CR(T)$ (the flow of the simple harmonic oscillator preserves linear subspaces) we obtain
\begin{align}
\nonumber(\Phi_{H_{\mN}}^t)^*((g_1\circ\pi_{S})(g_2\circ\pi_{T}))&=
(((\Phi_{H_{\mN}}^t)^*g_1)\circ\pi_{S})(((\Phi_{H^0_{\mN}}^t)^*g_2)\circ\pi_{T})\\
&\in C_0(S)\hat{\otimes} \CR(T).
\end{align}
Hence \eqref{preserved} holds. The same holds for $(\Phi_{H_\mN}^{-t})^*$ implying that $(\Phi^t_{H_\mN})^*$ is an automorphism for the ideal.

To continue our proof we observe that, by definition of $\tilde U_\infty$ and $S$ and $T$, we may write $\tilde U_\infty=\pi_S^{-1}(U)$ for a precompact set $U\subset S$. 
By Urysohn's lemma, we may choose a continuous bump function $0\leq \tilde g\leq 1$ on $\Omega_\Lambda$ that is 1 on $\tilde U_\infty$ and supported in an slightly larger set containing $\tilde U_\infty$. Namely, we define $\tilde g:=\check g\circ \pi_S$ for a compactly supported $\check g\in C_\c(S)$ that is 1 on $U$. We then have $\tilde g\in C_0(S)\hat{\otimes} \CR(T)$. We then define $f_\infty:=\tilde{g}\cdot(\Phi_{H_\mN}^{t})^*g$. As $\pi_S(\tilde U_\infty)=U$, we obtain $f_\infty(\omega)=g\circ \Phi_{H_\mN}^{t}(\omega)$ for all $\omega\in \tilde U_\infty$. Moreover, 
$$f_\infty=\tilde g\cdot (g\circ \Phi_{H_\mN}^{t})=((\tilde g\circ \Phi_{H_\mN}^{-t})\cdot g)\circ\Phi_{H_\mN}^{t}.$$
By \eqref{preserved}, we have $\tilde g\circ \Phi_{H_\mN}^{-t}\in C_0(S)\hat{\otimes} \CR(T)$. 
Since $C_0(S)\hat{\otimes} \CR(T)\subseteq \CR(\Omega_\Lambda)$ is an ideal we obtain that $(\tilde g\circ \Phi_{H_\mN}^{-t})\cdot g\in C_0(S)\hat{\otimes} \CR(T)$. Again on account of \eqref{preserved} we obtain that $((\tilde g\circ \Phi_{H_\mN}^{-t})\cdot g)\circ\Phi^t_{H_\mN}\in C_0(S)\hat{\otimes} \CR(T)$. We conclude that $f_\infty\in \CR(\Omega_\Lambda)$, which finishes the proof.
\end{proof}

We are finally in a position to prove the following technical version of Proposition \ref{prop: compact case}.
\begin{proposition}\label{prop: compact case2}
Let $\Lambda\Subset\Gamma$, $\mN$ of the form \eqref{eq:mN}, and assume that $V_{kl}\in C_\c^\infty(\R^d)$ for all $k,l\in\mN$. Then, for all $t\in[0,1]$,
    \begin{align}\label{eq:to prove t_final3}
(\Phi_{H_\mN}^{t})^*(\CR(\Omega_\Lambda))\subseteq \CR(\Omega_\Lambda).
\end{align}
\end{proposition}
\begin{proof}

The proof of \eqref{eq:to prove t_final3} is performed by strong induction on the (finite) size of $\mathcal{N}$ appearing in $V_\mN=\sum_{\{k,l\}\in\mathcal{N}}2V_{kl}\circ \pi_{kl}$.  
If $|\mathcal{N}|=0$, the potential $V_\mN$ is zero and we obtain the simple harmonic oscillator for which we already know the statement holds (cf. Lemma \ref{lemm: simple time evolution}).
We now carry out the induction step. Assume the time evolution
with respect to $H_{\mN-kl}$ preserves $\CR(\Omega_\Lambda)$, for each $\{k,l\}\in\mathcal{N}$. We must show that the time evolution
with respect to $H_{\mN}$ preserves $\CR(\Omega_\Lambda)$. For a given $g\in \CR(\Omega_\Lambda)$ we write $f_{kl}:=(\Phi_{H_{\mN-kl}}^{t})^*g$ so that $f_{kl}\in \CR(\Omega_\Lambda)$. Fixing $f_\infty$ as in Lemma \ref{Lemm: existence function}, equation \eqref{uniform continuity} together with Lemma \ref{Lemm: existence function} implies that
\begin{align}\label{restriction identity}
    ||(\Phi_{H_{\mN}}^{t})^*g|_{U_Y}-f_Y|_{U_Y}||_\infty<\epsilon,
\end{align}
for each $Y\in\mathcal{N}\cup\{\infty\}$. We now construct a partition of unity $\{\eta_Y\}_{Y\in\mathcal{N}\cup\{\infty\}}$ subordinate to the cover $\{U_Y\}_{Y\in\mathcal{N}\cup\{\infty\}}$ of $\Omega_\Lambda$, to patch together the functions $f_Y$ and obtain a single function in $\CR(\Omega_\Lambda)$.
We start by defining non-negative smooth functions $\zeta_Y\in \CR(\Omega_\Lambda)$ that are $1$ on $W_Y$ and $0$ outside of $U_Y$. Namely, we take $\zeta_\infty:=g_\infty\circ \pi_S$ for some bump function $g_\infty$ on $S$, where $S$ defined in the previous lemma, and for each $Y=\{k,l\}\in\mathcal{N}$, we take $\zeta_{kl}$ to be either $g_{kl}\circ \pi^{\mom,\pos}_{kl}$ or $g_{kl}\circ\pi_{\{k,l\}}$ for some bump function $g_{kl}$ supported on the complement of a ball with radius  $D_{kl}$ (cf. \eqref{eq:Ukl}-\eqref{eq:Wkl}). Since $\{W_Y\}_{Y\in\mathcal{N}\cup\{\infty\}}$ is a cover of $\Omega_\Lambda$, the sum $\sum_Y \zeta_Y$ is bounded below by $1$ and therefore it is invertible in $\CR(\Omega_\Lambda)$. It follows that for each $Y\in\mathcal{N}\cup\{\infty\}$ the function
$$\eta_Y=\frac{\zeta_Y}{\sum_{Y'\in\mathcal{N}\cup\{\infty\}} \zeta_{Y'}}$$
lies in $\CR(\Omega_\Lambda)$. Moreover, \eqref{restriction identity} implies
\begin{align}
    ||(\Phi_{H_\mN}^{t})^*g - \sum_{Y\in\mathcal{N}\cup\{\infty\}}\eta_Yf_Y||_\infty<\epsilon.
\end{align}
Since $\epsilon>0$ was arbitrary and $\CR(\Omega_\Lambda)$ is norm-closed, the assertion follows.
\end{proof}

\begin{proof}[Proof of Prop. \ref{prop: compact case}]
By choosing $\mathcal{N}=\{\{k,l\}:k,l\in\Lambda,k\neq l\}$, so that $H_\mathcal{N}+c=H_\Lambda=H_\Lambda^0+V_\Lambda$, for a constant $c=\sum_{k\in\Lambda}V_{kk}(0)$, the first assertion of Proposition \ref{prop: compact case} follows directly from Proposition \ref{prop: compact case2}. The second assertion follows by the argument noted after the statement of Proposition \ref{prop: compact case}.
\end{proof}

\noindent
\subsection{Time evolution for general potentials}
We extend the result of the previous subsection to the general case ($V_{kl}\in C_0^1(\bR^d)$ with $\nabla V_{kl}$ Lipschitz). The following lemma provides the required approximation of generic potentials by compactly supported ones.

\begin{lemma}\label{lemm: approximation}
For $k,l\in\Lambda$, let $V_{kl}\in C^{1}_0(\mathbb{R}^d,\R)$ be such that $\nabla V_{kl}$ is Lipschitz. 
Then there exists a sequence $(V_{kl,m})_{m=1}^\infty\subseteq C_\c^\infty(\mathbb{R}^d,\R)$ such that $(\nabla V_{kl,m})_{m=1}^\infty$ converges uniformly to $\nabla V_{kl}$, as $m\to\infty$.
\end{lemma}
\begin{proof}
The fundamental theorem of calculus implies that $\nabla V_{kl}$ is necessarily bounded.
Since $C_\c^\infty(\R^d,\R)\subset C_0(\R^d,\R)$ is dense, we can extract a sequence $(\widetilde V_{kl,m})_{m=1}^\infty\subset C_\c^\infty(\R^d,\R)$ such that
     \begin{align}\label{approx2}
          ||\widetilde V_{kl,m} - V_{kl}||_\infty<\frac{1}{m^2}.
     \end{align}
     We introduce an approximation to the identity, i.e., we fix a positive function $h\in C_\c^\infty(\R^d)$ with $\int h(x)\,dx=1$ and let $h_m(x):=m^d h(mx)$ for all $m\in \mathbb{Z}_{\geq 0}$. 
     We define
 \begin{align*}
     V_{kl,m}:=h_m\ast \tilde V_{kl,m}.
 \end{align*}
 Note that  each $V_{kl,m}$ is compactly supported. 
 By a property of convolutions, 
 \begin{align*}
     \partial_i (V_{kl,m})=\partial_i h_m\ast \widetilde V_{kl,m},\qquad\partial_i (h_m\ast V_{kl})=h_m\ast \partial_iV_{kl},
 \end{align*}
 where $\partial_i$ is the $i^\text{th}$ partial derivative on $\R^d$.
 We compute
 \begin{align*}
     \|\partial_iV_{kl,m}-\partial_i V_{kl}\|_\infty\leq  \|\partial_iV_{kl,m}-\partial_ih_m\ast V_{kl}\|_\infty + \|\partial_i(h_m\ast V_{kl})-\partial_iV_{kl}\|_\infty\nonumber\\ \leq 
     \int_{\R^d} \partial_ih_m(x)\,dx\,||\widetilde V_{kl,m}-V_{kl}||_\infty+\|h_m\ast \partial_iV_{kl}-\partial_iV_{kl}\|_\infty.
 \end{align*}
 The first term on the right-hand side converges as a result of \eqref{approx2}, and the fact that $\int \partial_i h_m=m\int \partial_ih$ for all $m$. The second term converges since $h_m$ is an approximation to the identity, and $\partial_iV_{kl}$ is bounded.
 We therefore obtain the lemma.
\end{proof}

We now extend Proposition \ref{prop: compact case} to general $V$, thereby arriving at our final result for finite systems:
\begin{theorem}\label{thm: main}
Let $\Lambda\Subset\Gamma$ and let $V_{kl}\in C_0^{1}(\R^d,\R)$ ($k,l\in\Lambda$) be such that $V_{lk}(x)=V_{kl}(-x)$, and such that $\nabla V_{kl}$ is Lipschitz continuous (as is the case when Assumption \ref{conditions} is satisfied). Consider $H_\Lambda$ defined by \eqref{Hamiltoniannew2} with the ensuing flow $\Phi_{H_\Lambda}^t$, cf. \eqref{hamilton equations}-\eqref{hamilton equations8}. Then it holds
\begin{align}
    (\Phi_{H_\Lambda}^t)^*\CR(\Omega_\Lambda)=\CR(\Omega_\Lambda),
\end{align}
for all $t\in\mathbb{R}$.
\end{theorem}

\begin{proof}
By Lemma \ref{lemm: approximation}, we obtain a sequence of Hamiltonians 
$$H_{\Lambda,m}=H^0_\Lambda+\sum_{k,l\in\Lambda}V_{kl,m}\circ\pi_{kl}$$
whose corresponding dynamics $(\Phi_{H_{\Lambda,m}}^{t})^*$ preserve $\CR(\Omega_\Lambda)$ by Proposition \ref{prop: compact case}. (This was the point of the previous subsection.) The fact that the uniform convergence of gradients of Lemma \ref{lemm: approximation} implies that $(\Phi_{H_\Lambda}^{t})^*$ also preserves $\CR(\Omega_\Lambda)$ is proven exactly as is \cite[Theorem 15]{vanNuland_Stienstra_2019} (cf. \cite[Theorem 4.4.8]{vN}).
\end{proof}

\section{Infinite dimensional dynamical systems}\label{Section: infinite dynamics}
The main theorem of this section shows that the thermodynamic limit of the local dynamics $\alpha_{\Lambda}^t:\pi_\Lambda^*\CR(\Omega_\Lambda)\to\pi_\Lambda^*\CR(\Omega_\Lambda)$ defined by (cf. Def. \ref{def:time})
\begin{align}\label{eq:def alpha_Lambda}
\alpha_{\Lambda}^t(f\circ\pi_\Lambda):=f\circ\Phi^t_{H_\Lambda}\circ\pi_\Lambda\qquad(f\in\CR(\Omega_\Lambda))
\end{align}
is well defined and induces a one-parameter group of *-isomorphisms $\alpha^t$ on the resolvent algebra $\CR(\Omega)=\overline{\cup_{\Lambda\Subset\Gamma}\pi_\Lambda^*\CR(\Omega_\Lambda)}$. 
\begin{theorem}\label{thm: CR stable}
Given a countable set $\Gamma$, constants $m_k,\nu_k>0$, and functions $V_{kl}:\R^d\to\R$ for all $k,l\in\Gamma$ satisfying Assumption \eqref{conditions}, let $\alpha^t_\Lambda$ be the time flow defined by \eqref{eq:def alpha_Lambda}, for all $t\in\R$ and $\Lambda\Subset\Gamma$. Then, for all $t\in\R$, there exists a (unique) *-isomorphism
$$\alpha^t:\CR(\Omega)\to\CR(\Omega),$$
satisfying
\begin{align}\label{eq:def alpha^t}
    \alpha^t(f)=\lim_{\Lambda\nearrow\Gamma}\alpha_\Lambda^t(f)
\end{align}
for all $\Lambda_0\Subset\Gamma$ and $f\in\pi_{\Lambda_0}^*\CR(\Omega_{\Lambda_0})$. Note that \eqref{eq:def alpha^t} determines $\alpha^t$ on a dense subset of $\CR(\Omega)$, hence defines $\alpha^t$ uniquely.
Moreover, this collection of *-isomorphisms $(\alpha^t)_{t\in\R}$ is a one-parameter group. 
\end{theorem}
We spend the rest of this section to prove the above theorem.
Part of the approach is inspired by similar works in the context of quantum mechanics, e.g. \cite[Section 3]{Robinson}, \cite[Sections 7.1 and 10]{Buchholz_Grundling_2008} and \cite[Section 3.3.2]{Naaijkens_2017}. The two novelties here are firstly that the same methods work in setting of classical Hamiltonian dynamics, and secondly that assumptions on the material structure can be relaxed.

In fact, our approach does not require us to assume the existence of a metric on $\Gamma$, nor does it require us to introduce some type of $\Gamma$-regularity  or a ``Lieb-Robinson'' type norm, as done in e.g. \cite{KootVen}.
Besides the obvious benefits of simple assumptions, it provides a step towards the inclusion of more abstract models such as those arising in noncommutative geometry or the continuum limit of lattice gauge theory.
\\\\
We use the following notation. For every finitely supported $\beta:\Gamma\times\{1,\cdots 2d\}\to\mathbb{Z}_{\geq 0}$, with $|\beta|=\sum_{k\in\Gamma}\sum_{i=1}^{2d}\beta(k,i)$, we write
\begin{align}
    \partial^\beta=\prod_{k\in\Gamma}\prod_{1=1}^{2d}\partial_{k,i}^{\beta(k,i)},
\end{align}
where $\partial_{k,i}^{\beta(k,i)}=\partial^{\beta(k,i)}/\partial_{k,i}$, the usual partial derivative of order $\beta(k,i)$ acting on coordinate $i$ of site $k$.
The following lemma provides a time-independent bound on the non-interacting time evolution of potentials.
\begin{lemma}\label{lemm:estimate Poisson bracket}
Let $k,l\in\Lambda\Subset\Gamma$ and $t\in\R$, and define $V_{kl,t}:=V_{kl}\circ\pi_{kl}\circ\Phi_{H_{\Lambda}^0}^t$, 
i.e. the time evolution of $V_{kl}\circ\pi_{kl}$ with respect to simple harmonic motion. Then, for any $\beta:\Lambda\times\{1,\ldots,2d\}\to\Z_{\geq0}$
\begin{align}
    &\|\partial^\beta{V_{kl,t}}\|_\infty\leq C_{kl}C_V^{|\beta|},
\end{align}
where $C_V>0$ is a constant independent of $\Lambda,k,l$ and $t$.
\end{lemma}
\begin{proof}
Without loss of generality we may assume that $\Lambda=\{k,l\}$, as simple harmonic flow splits completely. For any function $g:\R^N\to\R$ and any $N\times N$ matrix $S$ we have for all $n\in\mathbb{Z}_{\geq 0}$ and $i_1,\ldots,i_n\in\{1,\ldots,N\}$,
$$\partial_{i_1}\cdots\partial_{i_n}(g\circ S)=\sum_{j_1,\ldots,j_n=1}^N((\partial_{j_1}\cdots \partial_{j_n}g)\circ S)S_{j_1i_1}\cdots S_{j_ni_n}.$$
We note that $\Phi_{H^0_\Lambda}^t$ is a linear map on the $N=4d$-dimensional vector space $\Omega_{\Lambda}=\Omega_{\{k,l\}}$, and hence has an associated matrix. We let $S=\Phi_{H^0_\Lambda}^t$ be this matrix, we take $g=V_{kl}\circ\pi_{kl}$, and we apply Assumption \ref{conditions}(iii a) to find
$$\|\partial_{i_1}\cdots\partial_{i_n}V_{kl,t}\|_\infty\leq C_{kl}C^nc_d^n\|S\|^n\qquad(i_1,\ldots,i_n\in\Lambda\times\{1,\ldots,2d\}),$$
where $c_d$ depends on the specific matrix norm used in $\|S\|$ (they are all equivalent).

We only need to prove that the number $\|S\|$ is bounded independently of $k,l$ and $t$. Note that by definition of $H_{\Lambda}^0$, the flow $\Phi_{H_\Lambda^0}^t$ decomposes into $2d$ independent simple harmonic oscillators, each defined on $\R^2$, and given by
\begin{align*}
    & p_{a,i}^t=-\sqrt{\nu_am_a}q_{a,i}\sin(\sqrt{\frac{\nu_a}{m_a}}t)+p_{a,i}\cos(\sqrt{\frac{\nu_a}{m_a}} t) \  &(a\in\{k,l\}; \ i=1,\cdots, d); \\
    &q_{a,i}^t=q_{a,i}\cos(\sqrt{\frac{\nu_a}{m_a}}t)+\frac{p_{a,i}}{\sqrt{\nu_am_a}}\sin(\sqrt{\frac{\nu_a}{m_a}} t) \  &(a\in\{k,l\}; \ i=1,\cdots,d),
\end{align*}
where $(p^t_{a,i},q^t_{a,i})$ denotes the $i^{th}$-component of $\Phi^t_{H_\Lambda^0}(p,q)$ in direction $a$. 
The above expressions imply that the matrix $S$ associated with the flow $\Phi_{H_\Lambda^0}^t$  in the standard $(p,q)$ basis is independent of $q$ and $p$; all its entries are bounded by a cosine or a sine times  $\sup_{a\in\Gamma}(\sqrt{\nu_am_a},1/\sqrt{\nu_am_a})$, which is a finite number on account of Assumption \ref{conditions}(iv).  We conclude that  $C_V=Cc_d\|S\|$ is bounded independently of $k,l$ (hence of $\Lambda=\{k,l\}$) and $t$.
\end{proof}

For any observable $f\in\pi_{\Lambda_0}^*\CR(\Omega_{\Lambda_0})$,
and any $\Lambda_0\subseteq\Lambda\Subset\Gamma$ we consider 
\begin{align}\label{expansion integral gamma}
    \gamma_\Lambda^{t}(f):=
    f\circ (\Phi_{H_\Lambda^0}^{-t}\circ\Phi_{H_\Lambda}^{t})\circ\pi_\Lambda,
\end{align}
$t>0$. Let us now fix an arbitrary $g\in\SR(\Omega_\Lambda)$. Lemma \ref{lemm: simple time evolution} implies that $g\circ\Phi^t_{H_\Lambda^0}\in\SR(\Omega_\Lambda).$ The main result of Section \ref{Sect:finite cases}, Theorem \ref{thm: main} entails that, $\gamma_\Lambda^{t}(g\circ\pi_\Lambda)\in \CR(\Omega)$.

Using $\frac{d}{ds}f\circ\Phi_{H}^s=\{f,H\}\circ \Phi_H^s$ (we view $H=H_\Lambda,H_{\Lambda}^0,\Phi_{H}^s$, $V_\Lambda$ as functions on $\Omega$ by composing with $\pi_\Lambda$), as well as $\{f\circ\Phi^{-s}_{H_\Lambda^0},V_\Lambda\}=\{f,V_\Lambda\circ\Phi^s_{H^0_\Lambda}\}\circ\Phi^{-s}_{H_\Lambda^0}$, and \eqref{expansion integral gamma} gives the following identity
\begin{align*}
   \gamma_\Lambda^t(f)&=f+\int_0^t\frac{d}{ds}(f\circ\Phi_{H_\Lambda^0}^{-s}\circ\Phi_{H_\Lambda}^s)ds\\
   &=f+\int_0^t\Big(\{f\circ\Phi_{H_\Lambda^0}^{-s},H_\Lambda\}\circ\Phi_{H_\Lambda}^{s}-\{f,H^0_\Lambda\}\circ\Phi_{H_\Lambda^0}^{-s}\circ\Phi_{H_\Lambda}^{s}\Big)ds\\
   &=f+\int_0^t\{f\circ\Phi_{H_\Lambda^0}^{-s},V_\Lambda\}\circ\Phi_{H_\Lambda}^{s}ds\\
   &=
   f+\int_0^t\gamma_\Lambda^s(\{f,V_\Lambda\circ\Phi_{H_\Lambda^0}^{s}\})ds\\
   &=f+\int_0^t\gamma_\Lambda^s(\{f,V_{\Lambda,s}\})ds,
\end{align*}
where $V_{\Lambda,s}:=V_\Lambda\circ\Phi_{H_\Lambda^0}^{s}$.
Iterating the right-hand side of the above equation yields a Dyson series with remainder
\begin{align}\label{eq:Dysonreeks met remainder}
     \nonumber&\gamma_\Lambda^t(f)=f+\\
     \nonumber&\sum_{n=1}^{M-1}\int_0^tds_n\int_0^{s_n}ds_{n-1}\cdots\int_0^{s_2}ds_1\{\{\{f,V_{\Lambda,s_1}\},V_{\Lambda,s_2}\}\cdots\},V_{\Lambda,s_n})\}+\\
     &\int_0^tds_M\int_0^{s_M}ds_{M-1}\cdots\int_0^{s_2}ds_1\gamma^{s_M}_\Lambda(\{\{\{f,V_{\Lambda,s_1}\},V_{\Lambda,s_2})\}\cdots\},V_{\Lambda,s_n}\}).
\end{align}
The goal is to estimate the integrand. We first point out that it is not clear \textit{a priori} whether the remainder vanishes as $M\to\infty$, as it involves derivatives of $f$ as $V_{kl}$ of increasing order in $M$. To solve this issue we consider the sets
\begin{align*}
\DhR(\Omega_\Lambda):=\{g\circ P \ |\ \hat{g}\in C_c^\infty(\ran P),P:\Omega_\Lambda\to\Omega_\Lambda\text{ projection}\},
\end{align*}
where $\hat{\cdot}$ denotes the Fourier transform. This set is dense in the space $\mathcal{S}_\mathcal{R}(\Omega_\Lambda)$ which in turn is dense in $\CR(\Omega_\Lambda)$. 
Given $f=g\circ P\in\DhR(\Omega_\Lambda)$, the number $B=2\pi\sup_{\hat{g}(x)\neq0}\|x\|$ is finite. It follows that for any multi-index $\beta_0:\Lambda\times\{1,\ldots,2d\}\to\Z_{\geq0}$ the higher-order derivative $\partial^{\beta_0}f$ satisfies
\begin{align}\label{eq:partial f}
    \|\partial^{\beta_0} f\|_\infty\leq (2\pi)^{|\beta_0|}\left\|\|x\|^{|\beta_0|}\hat{g}(x)\right\|_{L^1}\leq B^{|\beta_0|}\|\hat g\|_{L^1}\leq C_f^{|\beta_0|},
\end{align}
for a number $C_f=\max(B,\|\hat{g}\|_{L_1},1)$ only depending on $f$.
In the following discussion we take a fixed but arbitrary $f\in \pi_{\Lambda_0}^*\DhR(\Omega_{\Lambda_0}),$ localized on a fixed $\Lambda_0\Subset\Gamma$. The nested Poisson brackets appearing in \eqref{eq:Dysonreeks met remainder} have a larger support than $f$. The idea is to  accurately estimate this support as a function on the number of iterations, keeping in mind that at zero order, this is just the support of $f$, i.e. $\Lambda_0$.
To make this precise we proceed in the following manner.



Let $n\in\mathbb{Z}_{\geq 0}$ and let $\boldsymbol{k}=(k_1,\ldots,k_n),\boldsymbol{l}=(l_1,\ldots,l_n)\in\Gamma^n$ satisfy $k_j\neq l_j$ for every $j=1,\ldots,n$. Below we shall define a set $B_{\boldsymbol{kl}}^n$ of tuples $\boldsymbol\beta=(\beta_0,\ldots,\beta_n)$ of finitely supported multi-indices $\beta_j:\Gamma\times\{1,\ldots,2d\}\to\Z_{\geq0}$ such that 
\begin{align}\label{eq:Bkln}
\{\cdots\{\{f_{\Lambda_0},f_{k_1l_1}\},f_{k_2l_2}\}\cdots,f_{k_nl_n}\}=\sum_{\boldsymbol\beta\in B_{\boldsymbol{kl}}^n}\epsilon_{\boldsymbol\beta}n_{\boldsymbol{\beta}}\partial^{\beta_0}f_{\Lambda_0}\partial^{\beta_1}f_{k_1l_1}\cdots\partial^{\beta_n}f_{k_nl_n},
\end{align}
for all functions $f_{\Lambda_0}\in\pi_{\Lambda_0}^*\CR(\Omega_{\Lambda_0})$ and $f_{k_jl_j}\in\pi_{\{k_j,l_j\}}^*\CR(\Omega_{\{k,l\}})$ (the subscripts indicating on which finite subspace of $\Omega$ the functions depend),
where $\epsilon_{\boldsymbol\beta}\in\{\pm 1\}$ takes into account the antisymmetry of the Poisson bracket, and $n_{\boldsymbol{\beta}}\in\N$ is a combinatorial coefficient. 
Indeed, we may define $B_{\boldsymbol{kl}}^n$ recursively, by (using the standard Kronecker multi-indices $\delta_{(a,i)}\in\Z_{\geq0}^{\Gamma\times\{1,\ldots,2d\}}$)
\begin{align*}
    B_{\boldsymbol{kl}}^1:=\Big\{&\boldsymbol\beta=(\delta_{(a,i)},\delta_{(a,i+d)})
    ~:~ a\in\Lambda_0\cap\{k_1,l_1\}, i\in\{1,\ldots,2d\}\Big\}\\
    B_{\boldsymbol{kl}}^{n+1}:=\Big\{&\boldsymbol\beta=(\beta_0,\ldots,\beta_{n},\delta_{(a,i+d)})~:~i\in\{1,\ldots,2d\},\\
    &a\in(\Lambda_0\cup\{k_1,\ldots,k_{n},l_1,\ldots,l_n\})\cap\{k_{n+1},l_{n+1}\},\\
    &\exists j\in\{1,\ldots,n\}:(\beta_0,\ldots,\beta_j-\delta_{(a,i)},\ldots,\beta_{n})\in B_{\boldsymbol{kl}}^{n}\Big\},
\end{align*}
where $i+d$ is interpreted modulo $2d$. 
Note that each $\boldsymbol{\beta}\in B^n_{\boldsymbol{kl}}$ satisfies the conditions $|\beta_0|+\ldots+|\beta_n|=2n$, $\beta_0(a)=0$ for $a\notin\Lambda_0$, and $\beta_j(a)=0$ for $a\notin\{k_j,l_j\}$. These nested Poisson brackets expand into a binomial-type sum with two choices at each level, determined by the $\pm$ structure of the Poisson bracket.
\begin{remark}\label{rem:hypergraph}
Let $G_{\boldsymbol{kl}}^n$ be the hypergraph whose vertices are the elements of $\{k_1,l_1,\ldots,k_n,l_n\}\cup\Lambda_0$ and whose hyperedges are the sets $\{k_1,l_1\},\ldots,\{k_n,l_n\}$ and $\Lambda_0$. Then $B_{kl}^n$ is nonempty only if $G_{\boldsymbol{kl}}^n$ is connected. In fact $B_{kl}^n\neq\emptyset$ if and only if $G^{i}_{\boldsymbol{kl}}$ is connected for all $i=1,\ldots,n$. As $k_j\neq l_j$, $G_{\boldsymbol{kl}}^n$ does not contain self-loops. Note that $G_{\boldsymbol{kl}}^n$ is almost a graph, the only exception is the hyperedge $\Lambda_0$.
\end{remark}
From the sets $B_{\boldsymbol{kl}}^n$ we extract the following numbers
\begin{align}\label{def Rkl}
 R_{\boldsymbol{kl}}^n:=\sum_{\boldsymbol\beta\in B_{\boldsymbol{kl}}^n}n_{\boldsymbol{\beta}}C_f^{|\beta_0|},
\end{align}
where $C_f\geq1$ is defined by \eqref{eq:partial f}.
The numbers $R_{\boldsymbol{kl}}^n$ satisfy the following recursive relation. 
\begin{lemma}\label{graph lemma}
Let $\Lambda_0\Subset\Gamma$, $\boldsymbol{k},\boldsymbol{l}\in\Gamma^n$ and $f\in \pi_{\Lambda_0}^*\DhR(\Omega_{\Lambda_0})$. 
Let $G^n_{\boldsymbol{kl}}$ be the hypergraph of Remark \ref{rem:hypergraph}.
Assume that the graph is connected and does not contain self-loops.
The following relation holds
\begin{align}\label{relation}
    R_{\boldsymbol{kl}}^n=2d({\rm wdeg}_{G_{\boldsymbol{kl}}^{n-1}}(k_n)+{\rm wdeg}_{G_{\boldsymbol{kl}}^{n-1}}(l_n))R_{\boldsymbol{kl}}^{n-1},
\end{align}
where the numbers $R_{\boldsymbol{kl}}^n$ are given by \eqref{def Rkl} (and $R_{\boldsymbol{kl}}^0=1$), and for every $a\in\Gamma$ we define the number
\begin{align}
    {\rm wdeg}_{G_{\boldsymbol{kl}}^{n-1}}(a)=C_f1_{\Lambda_0}(a)+\sum_{j=1}^{n-1} 1_{\{k_j,l_j\}}(a),
\end{align}
which we think of as a ``weighted degree'' of the vertex $a$ in the graph $G_{\boldsymbol{kl}}^{n-1}=G^{n-1}_{(k_1,\ldots,k_{n-1},l_1,\ldots,l_{n-1})}$.
\end{lemma}
\begin{remark}\label{Rmk:relation}
The weighted degree of any vertex $a\in\Gamma$ is almost the regular degree, except that the hyperedge $\Lambda_0$ contributes $C_f$ instead of $1$ to the degree. In particular, for $a\notin\Lambda_0$, the corresponding weighted degree is the usual degree of the graph, and ${\rm wdeg}_{G_{\boldsymbol{kl}}^{n}}(a)=0$ if $a$ is not a vertex of $G^n_{\boldsymbol{kl}}$.
\end{remark}

\begin{proof}[Proof of Lemma \ref{graph lemma}]
Let us first check the case $n=1$. If both $k_1\in \Lambda_0$ and $l_1\in \Lambda_0$, then ${\rm wdeg}_{G_{\boldsymbol{kl}}^0}(k_1)=C_f={\rm wdeg}_{G_{\boldsymbol{kl}}^0}(l_1)$, and now $R_{\boldsymbol{kl}}^1=2(2d)C_f$, which follows from the assumption $k_1\neq l_1$ and the choices $\boldsymbol{\beta}=(\beta_0,\beta_1)=(\delta_{(k_1,i)},\delta_{(k_1,i)})$ and $\boldsymbol{\beta}=(\delta_{(l_1,i)},\delta_{(l_1,i)})$ for $i=1,\ldots,2d$. Therefore, both sides of \eqref{relation} equal $(2d)2C_f$. If $k_1\in\Lambda_0$ and $l_1\notin\Lambda_0$ then $R_{\boldsymbol{kl}}^1=(2d)C_f$, and these vertices have weighted degrees  ${\rm wdeg}_{G_{\boldsymbol{kl}}^0}(k_1)=C_f$ and  ${\rm wdeg}_{G_{\boldsymbol{kl}}^0}(l_1)=0$. Therefore, \eqref{relation} equals $(2d)C_f$. The same argument applies if $k_1\notin\Lambda_0$ and $l_1\in\Lambda_0$.
\\
To prove the general case we compute 
\begin{align*}
&\{\cdots\{\{f_{\Lambda_0},f_{k_1l_1}\},f_{k_2l_2}\}\cdots,f_{k_nl_n}\}\\
&=\sum_{\boldsymbol\beta\in B_{\boldsymbol{kl}}^{n-1}}\epsilon_{(\beta_0,\ldots,\beta_{n-1})}n_{(\beta_0,\ldots,\beta_{n-1})}\{\partial^{\beta_0}f_{\Lambda_0}\partial^{\beta_1}f_{k_1l_1}\cdots\partial^{\beta_{n-1}}f_{k_{n-1}l_{n-1}},f_{k_nl_n}\}\\
&=\sum_{\boldsymbol\beta\in B_{\boldsymbol{kl}}^{n-1}}\epsilon_{(\beta_0,\ldots,\beta_{n-1})}n_{(\beta_0,\ldots,\beta_{n-1})}\sum_{i=1}^{2d}\epsilon_{i\leq d}\Bigg(\\
&\quad\partial_{k_n,i}\Big(\partial^{\beta_0}f_{\Lambda_0}\partial^{\beta_1}f_{k_1l_1}\cdots\partial^{\beta_{n-1}}f_{k_{n-1}l_{n-1}}\Big)\partial_{k_n,i+d}f_{k_nl_n}\\
&\quad+\partial_{l_n,i}\Big(\partial^{\beta_0}f_{\Lambda_0}\partial^{\beta_1}f_{k_1l_1}\cdots\partial^{\beta_{n-1}}f_{k_{n-1}l_{n-1}}\Big)\partial_{l_n,i+d}f_{k_nl_n}\Bigg),
\end{align*}
where $\epsilon_{i\leq d}=-1$ $(i\leq d)$, $\epsilon_{i\leq d}=1$ $(i>d)$ represents the sign of the Poisson bracket.
We can then apply the Leibniz rule and note that $$\partial_{a,i}\partial^{\beta_j} f_{k_jl_j}=1_{\{k_j,l_j\}}(a)\partial^{\beta_j+\delta_{(a,i)}}f_{k_jl_j}$$ and similarly for $f_{\Lambda_0}$. By comparing the resulting formula with \eqref{eq:Bkln}, which defines $B_{\boldsymbol{kl}}^n$, one straightforwardly arrives at the lemma.
\end{proof}

We will now use the Dyson series \eqref{eq:Dysonreeks met remainder} to show that the small-time time-evolutions of elements of $\DhR(\Omega_{\Lambda_0})$ converge in the thermodynamic limit, thus making the main technical step towards Theorem \ref{thm: main}.

\begin{proposition}\label{prop:dynamics on Dhat}
Given a countable set $\Gamma$, constants $m_k,\nu_k>0$, and functions $V_{kl}:\R^d\to\R$ for all $k,l\in\Gamma$ satisfying Assumption \eqref{conditions}, let $\alpha^t_\Lambda$ be the time flow corresponding to $H_\Lambda$ and defined by Definition \ref{def:time}, for all $t\in\R$ and $\Lambda\Subset\Gamma$. There exists a constant $t_\lambda>0$ such that for all $\Lambda_0\Subset\Gamma$ and all $f\in\pi_{\Lambda_0}^*\DhR(\Omega_{\Lambda_0})$ we have (assuming $\Lambda_0\subseteq\Lambda,\Lambda'$)
$$\lim_{\Lambda,\Lambda'\nearrow\Gamma}\|\alpha_\Lambda^t(f)-\alpha_{\Lambda'}^t(f)\|_\infty=0,$$
for all $|t|< t_\lambda$.
\end{proposition}
\begin{proof}
To demonstrate the thesis we use the regularity condition (iii) from Assumption \eqref{conditions} as follows. For each $t\in\R$ and $k,l\in\Lambda\Subset\Gamma$ we denote $V_{kl,t}:=V_{kl}\circ\pi_{kl}\circ\Phi^t_{H_{\Lambda}^0}\circ\pi_{\Lambda}\in\CR(\Omega)$, so that $V_{\Lambda,t}:=\sum_{k,l\in\Lambda}V_{kl,t}=V_\Lambda\circ\Phi^t_{H^0_\Lambda}\circ\pi_\Lambda\in\CR(\Omega)$. Note that $V_{kl,t}$ is actually independent of $\Lambda$, and coincides with $V_{kl,t}$ of Lemma \ref{lemm:estimate Poisson bracket} interpreted as function on $\Omega$ using the injective *-homomorphism $\pi_\Lambda^*:\CR(\Omega_\Lambda)\to\CR(\Omega)$. We recall from \eqref{eq:partial f} and Lemma \ref{lemm:estimate Poisson bracket} that there exist constants $C_V,C_f\geq 1$ such that for any finitely supported $\beta:\Gamma\times\{1,\ldots,2d\}\to\Z_{\geq0}$, any $k,l\in\Gamma$, and any $t\in\R$, we have
\begin{align}\label{eq:bound partial V}
\|\partial^\beta f\|_\infty\leq C_f^{|\beta|},\qquad\|\partial^\beta V_{kl,t}\|_\infty
\leq C_{kl}C_V^{|\beta|}.
\end{align}

These ingredients allow us to bound the terms and remainder of the Dyson series \eqref{eq:Dysonreeks met remainder}. To this end, we first observe
\begin{align*}
&\|\{\cdots\{\{f,V_{\Lambda,s_1}\},V_{\Lambda,s_2}\}\cdots,V_{\Lambda,s_n}\}\|_\infty\\
&\leq\sum_{\substack{k_1,\ldots,k_n\\l_1,\ldots,l_n}}\|\{\cdots\{\{f,V_{k_1l_1,s_1}\},V_{k_2l_2,s_2}\}\cdots,V_{k_nl_n,s_n}\}\|_\infty\\
&\leq\sum_{\substack{k_1,\ldots,k_n\\l_1,\ldots,l_n}}\sum_{\boldsymbol\beta\in B_{\boldsymbol{kl}}^n}|\epsilon_{\beta}|n_{\boldsymbol{\beta}}\|\partial^{\beta_0}f\|_\infty\|\partial^{\beta_1}V_{k_1l_1,s_1}\|_\infty\cdots\|\partial^{\beta_n}V_{k_nl_n,s_n}\|_\infty\\
&\leq\sum_{\substack{k_1,\ldots,k_n\\l_1,\ldots,l_n}}\sum_{\boldsymbol\beta\in B_{\boldsymbol{kl}}^n}n_{\boldsymbol{\beta}}C_f^{|\beta_0|}C_{k_1l_1}\cdots C_{k_nl_n}C_V^{|\beta_1|+\ldots+|\beta_n|}\\
&\leq C_V^{2n}\sum_{\substack{k_1,\ldots,k_n\\l_1,\ldots,l_n}}R_{\boldsymbol{kl}}^nC_{k_1l_1}\cdots C_{k_nl_n},
\end{align*}
where the sum is over all indices $k_1,\ldots,k_n,l_1,\ldots,l_n$ such that $k_j\neq l_j$ and $\{k_j,l_j\}\cap(\Lambda_0\cup\{k_1,\ldots,k_{j-1},l_1,\ldots,l_{j-1}\})\neq\emptyset$ for all $j=1,\ldots,n$. We have used that for all $\boldsymbol\beta=(\beta_0,\ldots,\beta_n)\in B_{\boldsymbol{kl}}^n$ we in particular have $|\beta_0|+\ldots+|\beta_n|=2n$, so $|\beta_1|+\ldots+|\beta_n|\leq 2n$. 
Applying Lemma \ref{graph lemma} yields
\begin{align*}
&\|\{\cdots\{\{f,V_{\Lambda,s_1}\},V_{\Lambda,s_2}\}\cdots,V_{\Lambda,s_n}\}\|_\infty\\
&\leq C_V^{2n}\sum_{\substack{k_1,\ldots,k_{n}\\l_1,\ldots,l_{n}}}2d({\rm wdeg}_{G_{\boldsymbol{kl}}^{n-1}}(k_n)+{\rm wdeg}_{G_{\boldsymbol{kl}}^{n-1}}(l_n))R_{\boldsymbol{kl}}^{n-1} C_{k_1l_1}\cdots C_{k_nl_n}\\
&\leq 2dC_V^{2n}\sum_{\substack{k_1,\ldots,k_{n-1}\\l_1,\ldots,l_{n-1}}}R_{\boldsymbol{kl}}^{n-1}C_{k_1l_1}\cdots C_{k_{n-1}l_{n-1}}\\
&\quad\cdot\sum_{k_n,l_n\in\Gamma}({\rm wdeg}_{G_{\boldsymbol{kl}}^{n-1}}(k_n)+{\rm wdeg}_{G_{\boldsymbol{kl}}^{n-1}}(l_n)) C_{k_nl_n}\\
&= 2dC_V^{2n}\sum_{\substack{k_1,\ldots,k_{n-1}\\l_1,\ldots,l_{n-1}}}R_{\boldsymbol{kl}}^{n-1}C_{k_1l_1}\cdots C_{k_{n-1}l_{n-1}}\\
&\qquad\cdot\Big(\sum_{k_n\in\Gamma}{\rm wdeg}_{G_{\boldsymbol{kl}}^{n-1}}(k_n)\sum_{l_n\in\Gamma}C_{k_nl_n}+\sum_{l_n\in\Gamma}{\rm wdeg}_{G_{\boldsymbol{kl}}^{n-1}}(l_n)) \sum_{k_n\in\Gamma}C_{k_nl_n}\Big)\\
&\leq 2dC_V^{2n}\sum_{\substack{k_1,\ldots,k_{n-1}\\l_1,\ldots,l_{n-1}}}R_{\boldsymbol{kl}}^{n-1}C_{k_1l_1}\cdots C_{k_{n-1}l_{n-1}}\Big(2\sum_{k_n\in\Gamma}{\rm wdeg}_{G_{\boldsymbol{kl}}^{n-1}}(k_n)c\Big)\\
&= 4cdC_V^{2n}\sum_{\substack{k_1,\ldots,k_{n-1}\\l_1,\ldots,l_{n-1}}}R_{\boldsymbol{kl}}^{n-1}C_{k_1l_1}\cdots C_{k_{n-1}l_{n-1}}(C_f|\Lambda_0|+2n-2),
\end{align*}
where we have introduced a constant $c=\sup_k\sum_l C_{kl}=\sup_l\sum_k C_{kl}$ in the second to last line, which is finite by Assumption \ref{conditions}(iii b).
Repeating the above procedure yields
\begin{align*}
&\|\{\cdots\{\{f,V_{\Lambda,s_1}\},V_{\Lambda,s_2}\}\cdots,V_{\Lambda,s_n}\}\|_\infty\\
&\leq (4cd)^nC_V^{2n}(C_f|\Lambda_0|)(C_f|\Lambda_0|+2)\cdots (C_f|\Lambda_0|+2n-2)\\
&\leq (4cdC_V^2)^n(C_f|\Lambda_0|+2n-2)^n.
\end{align*}
Following Naaijkens \cite[Sect. 3]{Naaijkens_2017}, and  Robinson \cite{Robinson}, we set
\begin{align}
    a=C_f|\Lambda_0|+2n-2
\end{align}
and use that $a^n\leq n!\lambda^{-n}\exp(a\lambda)$ for all positive $a$ and $\lambda$, to obtain
\begin{align}
    (C_f|\Lambda_0|+2n-2)^n\leq n!\lambda^{-n}\exp((C_f|\Lambda_0|+2n-2)\lambda).
\end{align}
We thus find
\begin{align*}
\|\{\cdots\{\{f,V_{\Lambda,s_1}\},V_{\Lambda,s_2}\}\cdots,V_{\Lambda,s_n}\}\|_\infty\leq n!\exp((C_f|\Lambda_0|-2)\lambda)\bigg{(}\frac{4cdC_V^{2} e^{2\lambda}}{\lambda}\bigg{)}^n.
\end{align*}

By the above bounds, the $n^\text{th}$ term in the Dyson series \eqref{eq:Dysonreeks met remainder} can be bounded by
\begin{align*}
    \frac{|t|^n}{n!}\|\{\cdots\{\{f,V_{\Lambda,s_1}\},V_{\Lambda,s_2}\}\cdots,V_{\Lambda,s_n}\}\|_\infty&\leq \exp((C_f|\Lambda_0|-2)\lambda)\bigg{(}\frac{4cdC_V^{2}e^{2\lambda}}{\lambda}\bigg{)}^n|t|^n,
\end{align*}
noting that the $\frac{|t|^n}{n!}$ is the measure of the integral. Since $\sum_nx^n$ converges for $|x|<1$, the Dyson series converges whenever
    $$|t|< t_\lambda:=\bigg{(}\frac{4cdC_V^{2}e^{2\lambda}}{\lambda}\bigg{)}^{-1}.$$
The same bound holds for the remainder of \eqref{eq:Dysonreeks met remainder}, as $\gamma_\Lambda^t$ is isometric, which shows that for all $|t|<t_\lambda$ we have
\begin{align}\label{eq:Dysonreeks abs conv}
     &\gamma_\Lambda^t(f)=f+\\&\sum_{n=1}^{\infty}\int_0^tds_n\int_0^{s_n}ds_{n-1}\cdots\int_0^{s_2}ds_1\{\cdots\{\{f,V_{\Lambda,s_1}\},V_{\Lambda,s_2}\}\cdots,V_{\Lambda,s_n}\}\nonumber
\end{align}
and this Dyson series converges absolutely. To prove the dynamics extends globally we proceed as follows.
By the same arguments as above, for each $n\in\mathbb{Z}_{\geq 0}$ we have
\begin{align*}
     &\sum_{(\boldsymbol{k},\boldsymbol{l})\in\Gamma^{2n}}\supnorm{\{\cdots\{\{f,V_{k_1l_1,s_1}\},V_{k_2l_2,s_2}\}\cdots,V_{k_nl_n,s_n}\}}\\
     &\leq n!\exp((C_f|\Lambda_0|-2)\lambda)\frac{1}{t_\lambda^n}<\infty,
\end{align*}
whenever $0\leq |s_1|\leq\cdots\leq |s_n|\leq |t|< t_\lambda$.
This in particular shows that, although $\Gamma$ is infinite, the above series converges (absolutely). In fact,
\begin{align*}
&\sum_{n=0}^\infty \sum_{(\boldsymbol{k},\boldsymbol{l})\in\Gamma^{2n}}\int_0^tds_n\int_0^{s_n}ds_{n-1}\cdots\\
&\qquad\cdots\int_0^{s_2}ds_1\|\{\{\{f,V_{k_1l_1,s_1}\},V_{k_2l_2,s_2}\}\cdots,V_{k_nl_n,s_n}\}\|_\infty<\infty.
\end{align*}
Hence, for all $\epsilon>0$ there exists a finite partial sum of the above series, with index set $F\Subset \coprod_{n=0}^\infty \Gamma^{2n}=\{(n,(\boldsymbol{k},\boldsymbol{l})):n\geq0,\boldsymbol{k},\boldsymbol{l}\in\Gamma^n\}$, say, such that for all $|t|<t_\lambda$ we have
\begin{align*}
&\sum_{\substack{(n,(\boldsymbol{k},\boldsymbol{l}))\in(\coprod_{n=0}^\infty \Gamma^{2n})\setminus F}}\int_0^tds_n\int_0^{s_n}ds_{n-1}\cdots\\
&\qquad\cdots\int_0^{s_2}ds_1\|\{\{\{f,V_{k_1l_1,s_1}\},V_{k_2l_2,s_2}\}\cdots,V_{k_nl_n,s_n}\}\|_\infty<\epsilon.
\end{align*}
By defining
$$\Lambda_\epsilon:=\Lambda_0\cup \bigcup_{(n,(\boldsymbol{k},\boldsymbol{l}))\in F}\{k_1,l_1,\ldots,k_n,l_n\},$$
we find that $\Lambda_\epsilon\Subset\Gamma$ is finite and $F\subseteq \coprod_{n=0}^\infty\Lambda_\epsilon^{2n}$.

Let $\Lambda\supseteq\Lambda'\supseteq\Lambda_\epsilon$ be two finite subsets of $\Gamma$. As both Dyson series \eqref{eq:Dysonreeks abs conv} of $\gamma_\Lambda^t(f)$ and $\gamma_{\Lambda'}^t(f)$ absolutely converge, we can compare them term by term and find
\begin{align*}
&\|\gamma_\Lambda^t(f)-\gamma_{\Lambda'}^t(f)\|_\infty\\
&\leq\Big\|\sum_{n=0}^\infty\int_0^tds_n\cdots\int_0^{s_2}ds_1\Bigg(\{\cdots\{\{f,V_{\Lambda,s_1}\},V_{\Lambda,s_2}\}\cdots,V_{\Lambda,s_n})\}\\
&\qquad-\{\cdots\{\{f,V_{\Lambda',s_1}\},V_{\Lambda',s_2}\}\cdots,V_{\Lambda',s_n}\}\Bigg)\Big\|_\infty\\
&=\Big\|\bigg(\sum_{(n,(\boldsymbol{k},\boldsymbol{l}))\in \coprod_{n=0}^\infty\Lambda^{2n}}-\sum_{(n,(\boldsymbol{k},\boldsymbol{l}))\in \coprod_{n=0}^\infty(\Lambda')^{2n}}\bigg)(\int_0^tds_n\cdots\int_0^{s_2}ds_1\\
&\quad\{\cdots\{\{f,V_{k_1l_1,s_1}\},V_{k_2l_2,s_2}\}\cdots,V_{k_nl_n,s_n}\})\Big\|_\infty\\
&\leq \sum_{(n,(\boldsymbol{k},\boldsymbol{l}))\in (\coprod_{n=0}^\infty\Gamma^{2n})\setminus F}\int_0^tds_n\cdots\int_0^{s_2}ds_1\\
&\quad\supnorm{\{\cdots\{\{f,V_{k_1l_1,s_1}\},V_{k_2l_2,s_2}\}\cdots,V_{k_nl_n,s_n}\}}\\
&<\epsilon,
\end{align*}
where in the second to last inequality we have used the fact that, because $\Lambda_\epsilon\subseteq\Lambda'$, we obtain $F\subseteq\coprod_{n=0}^\infty\Lambda_\epsilon^{2n}\subseteq\coprod_{n=0}^\infty(\Lambda')^{2n}$, and therefore $\coprod_{n=0}^\infty\Lambda^{2n}\setminus \coprod_{n=0}^\infty(\Lambda')^{2n}\subseteq (\coprod_{n=0}^\infty\Gamma^{2n})\setminus F$.



In summary, for $f\in \pi_{\Lambda_0}^*\DhR(\Omega_{\Lambda_0})$ and $|t|<t_\lambda$, the net $\{\gamma_\Lambda^t(f)\}_{\Lambda\Subset\Gamma}$ is Cauchy as $\Lambda_0\subseteq\Lambda\nearrow\Gamma$. Since
$$\Phi_{H_\Lambda^0}^t\circ\pi_{\Lambda_0}=\Phi_{H_{\Lambda_0}^0}^t\circ\pi_{\Lambda_0},$$
for $\Lambda\supseteq\Lambda_0$, it follows that
\begin{align*}
    \alpha_\Lambda^t(f)=f\circ\Phi^t_{H_\Lambda}\circ\pi_\Lambda=\gamma_\Lambda^t(f\circ\Phi^t_{H_\Lambda^0}\circ\pi_{\Lambda_0})=\gamma_\Lambda^t(f\circ\Phi_{H_{\Lambda_0}^0}^t\circ\pi_{\Lambda_0}).
\end{align*}
Note that $f\circ\Phi_{H_{\Lambda_0}^0}^t\circ\pi_{\Lambda_0}\in\pi_{\Lambda_0}^*\DhR(\Omega_{\Lambda_0})$ is a function independent of $\Lambda$.
Hence, if $\Lambda,\Lambda'$ grow large, 
\begin{align}\label{eq:Cauchy alpha Lambda}
    \|\alpha_\Lambda^t(f)-\alpha_{\Lambda'}^t(f)\|_\infty\to 0,
\end{align}
whenever $|t|<t_\lambda$.
\end{proof}

We now prove the main theorem of this section.

\begin{proof}[Proof of Theorem \ref{thm: main}]
The above proposition allows us to define
$$\alpha^t(f):=\lim_{\Lambda\nearrow\Gamma}\alpha^t_\Lambda(f)$$
for all $f\in\pi_{\Lambda_0}^*\DhR(\Omega_{\Lambda_0})$ and $|t|<t_\lambda$.
Let $\Lambda_0\subseteq\Lambda'\subseteq\Lambda\Subset\Gamma$. By the group properties of $\alpha_\Lambda^t$ and $\alpha_{\Lambda'}^t$,
\begin{align}\label{eq:t+t'}
\|\alpha_\Lambda^{t+t'}(f)-\alpha_{\Lambda'}^{t+t'}(f)\|_\infty\leq& \|\alpha_\Lambda^t(\alpha_\Lambda^{t'}(f)-\alpha_{\Lambda'}^{t'}(f))\|_\infty\nonumber\\
&+\|\alpha_{\Lambda}^t(\alpha_{\Lambda'}^{t'}(f))-\alpha_{\Lambda'}^t(\alpha_{\Lambda'}^{t'}(f))\|_\infty,
\end{align}
for all $|t|,|t'|<t_\lambda$. The first term is bounded by $\|\alpha_\Lambda^{t'}(f)-\alpha_{\Lambda'}^{t'}(f)\|_\infty$ as $\alpha^t_\Lambda$ is isometric. The second term can be estimated in the following manner. On account of  \eqref{eq:Cauchy alpha Lambda}, for any $\epsilon>0$ we may find $\Lambda''\Subset\Gamma$ sufficiently large, such that $\|\alpha_{\Lambda'}^{t'}(f)-\alpha_{\Lambda''}^{t'}(f)\|_\infty< \epsilon$ for all $\Lambda'\supseteq\Lambda''$.  Taking this $\Lambda''$, and observing that $\alpha_{\Lambda''}^{t'}(f)\in\pi_{\Lambda''}^*\CR(\Omega_{\Lambda''})$ (Theorem \ref{thm: main}), and that $\DhR(\Omega_{\Lambda''})\subset\CR(\Omega_{\Lambda''})$ is dense, we may find $g\in\pi_{\Lambda''}^*\DhR(\Omega_{\Lambda''})$ such that $\|g-\alpha_{\Lambda''}^{t'}(f)\|_\infty<\epsilon$. By the triangle inequality the previous implies for all $\Lambda'\supset\Lambda''$ that
\begin{align}\label{ineq: Dhoed}
    \|g-\alpha_{\Lambda'}^{t'}(f)\|_\infty\leq \|\alpha_{\Lambda'}^{t'}(f)-\alpha_{\Lambda''}^{t'}(f)\|_\infty+\|\alpha_{\Lambda''}^{t'}(f)-g\|_\infty\leq 2\epsilon.
\end{align}
Therefore, for $\Lambda''\subseteq\Lambda'\subseteq\Lambda$ the second term of \eqref{eq:t+t'} can be bounded as follows:
\begin{align*}
    &\|\alpha_{\Lambda}^t(\alpha_{\Lambda'}^{t'}(f))-\alpha_{\Lambda'}^t(\alpha_{\Lambda'}^{t'}(f))\|_\infty\leq
    \|\alpha_{\Lambda}^t(\alpha_{\Lambda'}^{t'}(f)-g)\|_\infty+ \\&\|\alpha_{\Lambda}^t(g)-\alpha_{\Lambda'}^t(g)\|_\infty+
    \|\alpha_{\Lambda'}^t(g-\alpha_{\Lambda'}^{t'}(f))\|_\infty\leq\\&
    2\|\alpha_{\Lambda'}^{t'}(f)-g\|_\infty+ \|\alpha_{\Lambda}^t(g)-\alpha_{\Lambda'}^t(g)\|_\infty\leq 4\epsilon+\|\alpha_{\Lambda}^t(g)-\alpha_{\Lambda'}^t(g)\|_\infty.
\end{align*}
Combining the above inequality with \eqref{eq:t+t'} yields
\begin{align*}
    \|\alpha_\Lambda^{t+t'}(f)-\alpha_{\Lambda'}^{t+t'}(f)\|_\infty\leq&  \|\alpha_\Lambda^{t'}(f)-\alpha_{\Lambda'}^{t'}(f)\|_\infty+4\epsilon+\|\alpha_{\Lambda}^t(g)-\alpha_{\Lambda'}^t(g)\|_\infty.
\end{align*}
On account of \eqref{eq:Cauchy alpha Lambda} we finally conclude
\begin{align*}
    \lim_{\Lambda,\Lambda'\nearrow\Gamma}\|\alpha_\Lambda^{t+t'}(f)-\alpha_{\Lambda'}^{t+t'}(f)\|_\infty=0\qquad (|t|,|t'|<t_\lambda).
\end{align*}
%
%
Repeating this argument yields
\begin{align}\label{limitexists}
    \lim_{\Lambda,\Lambda'\nearrow\Gamma}\|\alpha_\Lambda^t(f)-\alpha_{\Lambda'}^t(f)\|_\infty= 0\qquad\text{for all $t\in\R.$}
\end{align}
Hence, for each $f\in\pi_{\Lambda_0}^*\DhR(\Omega_{\Lambda_0})$ and each $t\in\R$, the net $\{\alpha_{\Lambda}^t(f)\}_{\Lambda\supseteq\Lambda_0}\subseteq\CR(\Omega)$ is a Cauchy net, and therefore has a $\|\cdot\|_\infty$-limit, which we denote by $\alpha^t(f)$.

We notice that \eqref{limitexists} defines $\alpha^t$ on a dense subspace of $\CR(\Omega)$, but in fact, $\alpha^t$ is isometric as seen from the computation
$$\supnorm{\alpha^t_{}(f)}=\lim_{\Lambda\nearrow\Gamma}\supnorm{\alpha^t_{\Lambda}(f)}=\lim_{\Lambda\nearrow\Gamma}\supnorm{f}=\supnorm{f}.$$
Therefore $\alpha^t_{}$ extends by continuity to an isometry
$$\alpha^t:\CR(\Omega)\to\CR(\Omega),$$
for all $t\in\R$. By density of $\pi_{\Lambda_0}^*\DhR(\Omega_{\Lambda_0})$ in $\pi_{\Lambda_0}^*\CR(\Omega_{\Lambda_0})$ we now conclude that the first statement of the theorem (i.e., \eqref{eq:def alpha^t}) holds.

Let us now deduce that the map $\CR(\Omega)\ni f\mapsto \alpha_{}^t(f)$ is a $*$-homomorphism. It suffices to prove this on $\pi_{\Lambda_0}^*\CR(\Omega_{\Lambda_0})$ by density, cf. \eqref{eq:CR closure of union}. Indeed, the homomorphism property follows from
\begin{align*}
||\alpha^t(f)\alpha^t(g)-\alpha^t(fg)||_\infty&= \lim_{\Lambda\nearrow\Gamma}||\alpha^t(f)\alpha^t(g)-\alpha_{\Lambda}^t(fg)||_\infty\\ 
&\leq\lim_{\Lambda\nearrow\Gamma}||\alpha^t(f)-\alpha_{\Lambda}^t(f)||_\infty||\alpha^t(g)||_\infty\\
&\quad+\lim_{\Lambda\nearrow\Gamma}||\alpha_{\Lambda}^t(f)||_\infty||\alpha^t(g)-\alpha_{\Lambda}^t(g)||_\infty\nonumber\\&=0,\qquad(f\in\pi_{\Lambda_0}^*\CR(\Omega_{\Lambda_0}))
\end{align*}
where we have used \eqref{eq:def alpha_Lambda} and the isometricity of $\alpha_{\Lambda}^t$. A similar argument shows that $f\mapsto \alpha_{}^t(f)$ preserves the adjoint, as well as linear combinations. To show that the map $t\mapsto \alpha^t_{}$ preserves the group structure we take $f\in \pi_{\Lambda_0}^*\CR(\Omega_{\Lambda_0})$, $t,t'\in\R$, and $\epsilon>0$ arbitrary. By \eqref{eq:CR closure of union} again, we find $\Lambda'\Subset\Gamma$ and $g\in\pi_{\Lambda'}^*\CR(\Omega_{\Lambda'})$ such that $\|g-\alpha^{t'}(f)\|_\infty<\epsilon$. Taking a limit over $\Lambda\supseteq\Lambda'$ and using that $t\mapsto\alpha^t_\Lambda$ is a group homomorphism, we find
  \begin{align*}
      &||\alpha_{}^{t+t'}(f)-\alpha_{}^{t}(\alpha_{}^{t'}(f))||_\infty\\
      &=\lim_{\Lambda\nearrow\Gamma}||\alpha_{\Lambda}^{t+t'}(f)-\alpha^{t}(\alpha_{}^{t'}(f))||_\infty\nonumber\\
      &\leq\lim_{\Lambda\nearrow\Gamma}||\alpha_{\Lambda}^{t}(\alpha_\Lambda^{t'}(f))-\alpha^{t}(g)||_\infty+\lim_{\Lambda\nearrow\Gamma}\|\alpha^{t}(g)-\alpha^{t}(\alpha^{t'}(f))||_\infty\nonumber\\
      &<\lim_{\Lambda\nearrow\Gamma}||\alpha_{\Lambda}^{t}(\alpha_\Lambda^{t'}(f))-\alpha_\Lambda^{t}(g)||_\infty+\epsilon\\
      &<2\epsilon,
 \end{align*}
where we have used isometricity of $\alpha^t,\alpha_\Lambda^t$ and  \eqref{limitexists}.  
As $\epsilon>0$ was arbitrary, and by \eqref{eq:CR closure of union}, it follows that
$$\alpha_{}^{t+t'}(f)=\alpha_{}^{t}(\alpha_{}^{t'}(f))\qquad(t,t'\in\R,f\in\CR(\Omega)),$$
hence $t\mapsto\alpha_{}^t$ is a one-parameter subgroup of automorphisms of $\CR(\Omega)$, concluding the proof of the theorem.
\end{proof}

\section{Strong continuity}\label{Sec:strong continuity}
In Theorem \ref{thm: CR stable} we have established the existence of the dynamics in the thermodynamic limit.
This however does not guarantee the desired notion of {\em strong continuity}, also called {\em pointwise norm continuity}, used in C*-dynamical systems, i.e. convergence of $\lim_{t\to0}\supnorm{\alpha^t_{}(f)-f}$. 
In this final part we investigate this form of continuity, taking inspiration from \cite[Proposition 7.2]{Buchholz_Grundling_2008} and the subsequent discussion. We firstly introduce the subalgebra
\begin{align}
    \mathfrak{K}:=C^*(\mathfrak{K}_\Lambda \ | \ \Lambda\Subset\Gamma)+ \mathbb{C}1\subset \CR(\Omega),
\end{align}
where $\mathfrak{K}_\Lambda=\{g\circ \pi_\Lambda\ |\ g\in C_0(\Omega_\Lambda) \}$. 
It is not difficult to see that $\mathfrak{K}$ is a proper non-trivial subalgebra of $\CR(\Omega)$. 
\begin{lemma}
Consider the situation of Theorem \ref{thm: main}. Then the automorphic action $\alpha:\R\to \textnormal{Aut}(\CR(\Omega))$ is strongly continuous on $\mathfrak{K}$.
\end{lemma}
\begin{proof}
     It suffices to show $\lim_{t\to0}\|\alpha^t(f)-f\|=0$ and by standard arguments we may furthermore assume that $f=g\circ \pi_\Lambda$ for a certain $g\in C_0(\Omega_\Lambda)$ and a fixed $\Lambda\Subset\Gamma$. We shall first prove
     \begin{align}\label{finite case compact}
         \lim_{t\to 0}||\alpha_{\Lambda}^t(g\circ \pi_\Lambda)-g\circ \pi_\Lambda ||_\infty=0.
     \end{align}
     To do so, we 
     note that
    \begin{align}\label{identity strongly cts}
         \|\alpha_\Lambda^t(g\circ\pi_\Lambda)-g\circ\pi_\Lambda\|_\infty&=\|g\circ\Phi^t_{H_\Lambda}\circ\pi_\Lambda-g\circ\pi_\Lambda\|_\infty\nonumber\\
         &=\|g\circ\Phi^t_{H_\Lambda}-g\|_\infty.
    \end{align}
     We now observe that $(C_0(\Omega_\Lambda), (\Phi^t_{H_\Lambda})^*)$ is a C*-dynamical system. Indeed, our assumptions on the potential ensure that the Hamiltonian flow is complete and since $(\Phi^t_{H_\Lambda})^*$ is a homeomorphism it preserves compacts, and hence leaves $C_0(\Omega_\Lambda)$ invariant. Strong continuity then follows from standard results (e.g. \cite{Moretti_vandeVen_2021}). By applying this observation to \eqref{identity strongly cts} and taking the limit $t\to 0$ we conclude that \eqref{finite case compact} holds.

     On account of \eqref{eq:def alpha^t}, given $\epsilon>0$, we find $\Lambda'\Subset\Gamma$ such that $\Lambda\subseteq\Lambda'$ and
     $$||\alpha_{}^t(g\circ \pi_{\Lambda}) - \alpha_{\Lambda'}^t(g\circ \pi_\Lambda)||_\infty\leq \epsilon/3.$$
     
     In order to compare the dynamics of $H_{\Lambda'}$ with that of $H_\Lambda$, we use \eqref{ham 3} and apply Gr\"{o}nwall's inequality  (cf. Lemma \ref{lem:Gronwall's inequality}) with $U=\Omega_{\Lambda'}$, $F=X_{H_{\Lambda}+H_{\Lambda'-\Lambda}^0}$ (the second term corresponds to $|\Lambda'|-|\Lambda|$ simple harmonic oscillators), $G=X_{H_{\Lambda'}}$, $t_0=0$, $t_1=t$, $y_0=z_0=\omega$,
     $$c:=\sup_{\omega'\in\Omega_{\Lambda'}}||G(\omega')-F(\omega')||\leq ||\nabla (V_{\Lambda'}-V_{\Lambda})||_\infty<\infty,$$
     $\varphi:=c$ and obtain
     \begin{align}\label{inequality fundamental}
     \sup_{\omega\in\Omega_{\Lambda'}}\|\Phi^t_{F}(\omega) - \Phi_{G}^t(\omega)\|\leq cte^{tC},
     \end{align}
     where $C$ denotes the Lipschitz constant of $G$.
     
     Because $g\circ \pi_\Lambda$ is uniformly continuous, the bound \eqref{inequality fundamental} implies that
     $$\sup_{\omega\in\Omega_{\Lambda'}}|g(\pi_\Lambda(\Phi^t_{H_{\Lambda}+H_{\Lambda'-\Lambda}^0}(\omega))) - g(\pi_\Lambda(\Phi_{H_{\Lambda'}}^t(\omega)))|\to 0$$
     as $t\to0$. We note that $\pi_\Lambda(\Phi^t_{H_{\Lambda}+H_{\Lambda'-\Lambda}^0}(\omega))=\pi_\Lambda(\Phi^t_{H_\Lambda}(\omega))$, and conclude
     \begin{align}\label{eq:strong continuity from Gronwall}
     \lim_{t\to0}\supnorm{\alpha_\Lambda^t(g\circ \pi_\Lambda)-\alpha^t_{\Lambda'}(g\circ \pi_\Lambda)}=0.
     \end{align}
     We combine our above results by the estimates
     \begin{align*}
         \nonumber&||\alpha_{}^t(g\circ \pi_\Lambda) - g\circ \pi_\Lambda||_\infty
         \\
         &\leq ||\alpha_{}^t(g\circ \pi_\Lambda) - \alpha_{\Lambda'}^t(g\circ \pi_\Lambda)||_\infty
         + || \alpha_{\Lambda'}^t(g\circ \pi_\Lambda)- \alpha_\Lambda^t(g\circ \pi_\Lambda)||_\infty\nonumber\\
         &\quad+
         ||\alpha_{\Lambda}^t(g\circ \pi_\Lambda)- g\circ \pi_\Lambda ||_\infty  \nonumber\\
         & \leq  \epsilon/3 + || \alpha_{\Lambda'}^t(g\circ \pi_\Lambda)- \alpha_\Lambda^t(g\circ \pi_\Lambda)||_\infty+
         ||\alpha_{\Lambda}^t(g\circ \pi_\Lambda)- g\circ \pi_\Lambda ||_\infty.\nonumber
     \end{align*}
    The second and third term can each be made smaller than $\epsilon/3$ for small enough $t$, by using \eqref{finite case compact} and \eqref{eq:strong continuity from Gronwall}. Strong continuity follows.
\end{proof}

Because of the interactions defined in terms of $V_{kl}$, the algebra $\mathfrak{K}$ is in general not stable under the dynamics. We therefore define the C*-algebra 
\begin{align}\label{eq:def L}
\mathfrak{L}=C^*(\cup_{t\in\R}\alpha_{}^t(\mathfrak{K}))
\end{align}
generated by $\alpha_{}^t(\mathfrak{K})$, ($t\in\mathbb{R}$), which is by definition stable under the action of the map $t\mapsto\alpha_{}^t$. It may be clear that the action  $\alpha_{}:\R\to \text{Aut}(\mathfrak{L})$  is still strongly continuous on $\mathfrak{L}$, where $\text{Aut}(\mathfrak{L})$ is the group of *-automorphisms of $\mathfrak{L}$. In summary, we have obtained our main theorem.
\begin{theorem}\label{thm: final goal}
    For any countable set $\Gamma$, constants $m_k,\nu_k>0$, and functions $V_{kl}:\R^d\to\R$ for all $k,l\in\Gamma$ satisfying Assumption \ref{conditions}, the C*-subalgebra $\mathfrak{L}\subset\CR(\Omega)$ defined by \eqref{eq:def L}, and the dynamics $\alpha^\R|_\mathfrak{L}$ obtained by restricting the time-evolution $\alpha^\R$ of Theorem \ref{thm: CR stable}, the tuple 
    $$(\mathfrak{L},\alpha^\R|_\mathfrak{L})$$
    is a C*-dynamical system. I.e., the map $\R\to\textnormal{Aut}(\mathfrak{L}),t\mapsto\alpha^t|_{\mathfrak{L}}$ is a well-defined strongly continuous group homomorphism.
\end{theorem}

The C*-dynamical systems $(\mathfrak L,\alpha^\R|_{\mathfrak L})$ themselves will depend on the specific system, i.e., on $\{m_k\}_{k\in\Gamma},\{\nu_k\}_{k\in\Gamma}$ and $\{V_{kl}\}_{k,l\in\Gamma}$. It is however nice that there is an overarching dynamical C*-algebra $(\CR(\Omega),\alpha^\R)$ which is sufficiently easy -- namely given by an inductive limit and on each finite subsystem similarly well-behaved as the $C_0$-functions -- in order to simplify subsequent analysis.

\begin{remark}
By Theorem \ref{thm: final goal} there also exists a nontrivial maximal subalgebra of $\CR(\Omega)$ on which the homomorphism  $t\mapsto \alpha^t$ is strongly continuous, but this algebra might be harder to describe. For a system of two non interacting particles, for instance, this maximal algebra is strictly larger than $\mathfrak L$. To see this, let the particles $k,l\in\Gamma$ have unit mass and force constant, and consider the projection $P(p_1,q_1,p_2,q_2):=(p_1-p_2,q_1-q_2)$. If one now takes $g(x,y)=e^{-\|x\|^2-\|y\|^2}$, it follows that $f:=g\circ P:(p_1,q_1,p_2,q_2)\mapsto e^{-(p_1-p_2)^2-(q_1-q_2)^2}$, and  the function $t\mapsto\alpha^t(f)$ is continuous, since it is constant. However, $f$ is not in $\mathfrak K=(C_0\oplus\mathbb C1)\otimes(C_0\oplus\mathbb C1)$, and, since $\alpha^t=\alpha^t_{\{k\}}\otimes\alpha^t_{\{l\}}$, $f$ is not in $\mathfrak L_\Gamma=\mathfrak K$.
\end{remark}

\section{Discussion}\label{sct:Discussion}
Similar to how the non-commutative resolvent algebra can model quantum statistical mechanics, the intriguing mathematical structure of the commutative resolvent algebra allows to study classical statistical mechanics. It is a C*-algebra possessing  many desirable features for the treatment of finite and infinite dimensional classical systems.  Indeed, due to its non-trivial ideal structure, the commutative resolvent algebra may serve as a container for C*-dynamical systems for a large class of classical Hamiltonians and can therefore be utilized to investigate emergent phenomena that arise in the thermodynamic limit.  Of particular importance in this context is understanding the  algebraic state space and investigating the notion of the classical Kubo--Martin--Schwinger (KMS) condition, which analogous to the renown Dobrushin--Lanford--Ruelle (DLR) equations, is believed to encode the thermodynamic properties of the underlying classical system \cite{Gallavotti_Verboven_1975}. This may lead to the possible equivalence between the DLR equations and the classical KMS condition, which has been proved in the continuum in \cite{Aizenman_Goldstein_Gruber_Lebowitz_Martin_1977}, in the context of classical spin lattice systems in \cite{Drago_vandeVen_2023}, and in the classical limit of mean-field quantum spin systems this issue has been investigated in \cite{vandeVen_2022_2}.
It is worthwhile to say that the relationship between states and probability measures is no longer necessarily described by the celebrated Riesz-Markov-Kakutani representation theorem, as in the standard  algebraic set-up where one considers $C_0$-functions.

We saw that the proof of our first main result could be successfully split into a finite-dimensional argument and an argument involving the thermodynamic limit.
Especially with respect to the thermodynamic limit, we have seen that techniques developed for quantum mechanics -- in particular, the well-known Dyson series approach of Robinson -- can be carried through to the classical case when one is very careful about adding sufficient smoothness assumptions on the potentials. The viewpoint that emerges from this technical procedure is that classical dynamics is harder to understand algebraically than quantum dynamics, although there are useful parallels. Indeed, we expect our result to have an analogue in the quantum case with significantly milder assumptions on the potentials.
This strategy of proving quantum results after establishing the corresponding classical result has proven successful in similar situations, namely in \cite{vanNuland_Stienstra_2019}, and in the passage from \cite{vanNuland_2019} to \cite{Buchholz-vanNuland}.



Finally, it cannot go unmentioned that the inclusion of Coulomb interactions -- instead of merely bounded interactions $V_{kl}$ -- into our model would be a valuable addition, and forms a great challenging question at present.

If nothing else, our result suggests that further study and applications of the commutative resolvent algebra to classical particle systems are definitely worthwhile. 

\section{Acknowledgements}
The authors thank Pieter Naaijkens,  Nicol\`{o} Drago, and Valter Moretti for their useful feedback. TvN was supported by ARC grant FL17010005. CvdV was supported by a postdoctoral fellowship granted by the Alexander von Humboldt Foundation (Germany). 

\paragraph{Data availability statement.} Data sharing is not applicable to this article as no new data were created or analysed in this study.

\paragraph{Compliance with Ethical Standards and conflict of interest statement.} The authors certify that they have no affiliations with or involvement in any organization or entity with any financial interest or non-financial interest in the subject matter discussed in this manuscript
The research involved no participation of humans or other animals except the authors themselves, and thus complies with the standards of ethics and informed consent of the journal.

\end{document}